\newtheorem{theorem}{Theorem}[section]
\newtheorem{lemma}[theorem]{Lemma}
\newtheorem{remark}[theorem]{Remark}
\newenvironment{sequation}{\begin{equation}}{\end{equation}}
\newenvironment{sequation*}{\begin{equation*}}{\end{equation*}}
\numberwithin{equation}{section}
\title{High energy solutions of quadratic coupling Schr{\"o}dinger equation with nonconstant potential\thanks{Supported by NSFC (12171326, 12171014, 12001382) and KZ202010028048.}}
\author{Mingyang Han and Kai Zhang\thanks{Corresponding author. Email: myanghan@163.com.}\\ School of Mathematical Sciences, Capital Normal University}
\date{}
\begin{document}
	\maketitle 
	\numberwithin{equation}{section}
	\setcounter{section}{0}
	\begin{abstract}
		In this paper we use the variational method, especially the perturbation method, to find high energy solutions of the Schr{\"o}dinger system
		\begin{sequation}
			\begin{cases} 
				{ - \Delta u + V(x)u = {\mu _1}\left| u \right|^{p-1}u + \beta uv,}&{x \in {\mathbb{R}^N},} \\ 
				{ - \Delta v + V(x)v = {\mu _2}\left| v \right|^{p-1}v + \frac{\beta }{2}{u^2},}&{x \in {\mathbb{R}^N},} \\ 
				{u,v \in {H^1}({\mathbb{R}^N}),}
			\end{cases}
		\end{sequation}
		where $2<p<2^\ast -1:=\frac{N+2}{N-2}$, $\mu_1,\mu_{2},\beta >0$, and the potential $V(x)$ satisfying some appropriate conditions. Moreover, the asymptotic behavior of the high energy solutions as $\beta\to 0$ is analyzed.
	\end{abstract}
	
	\paragraph{Keywords}  High energy solution; Perturbation method; Quadratic coupled; Nonconstant potential	
	\paragraph{2000 MR Subject Classification} $\ \ $35I20; 35I50; 35I57
	\section{Introduction} 

In this paper we use the variational method, especially the perturbation method, to find high energy solutions of the Schr{\"o}dinger system
\begin{sequation}\label{key}
	\begin{cases} 
		{ - \Delta u + V(x)u = {\mu _1}\left| u \right|^{p-1}u + \beta uv,}&{x \in {\mathbb{R}^N},} \\ 
		{ - \Delta v + V(x)v = {\mu _2}\left| v \right|^{p-1}v + \frac{\beta }{2}{u^2},}&{x \in {\mathbb{R}^N},} \\ 
		{u,v \in {H^1}({\mathbb{R}^N}),}
	\end{cases}
\end{sequation}
where $2<p<2^\ast -1:=\frac{N+2}{N-2}$, $\mu_1,\mu_{2},\beta >0$, and the potential $V(x)$ satisfying some appropriate conditions. Moreover, the asymptotic behavior of the high energy solutions as $\beta\to 0$ is analyzed.

System (\ref{key}) is a reduced system studied in \cite{colin2006numerical,colin2009stability,pomponio2010ground}. This system is related to Raman amplification in nonlinear optics. Raman amplification has many applications in nonlinear, ultrafast optics and optical communications. See \cite{headley2005raman,pomponio2010ground,tian2019schrodinger,wang2017solitary} for more information about system (\ref{key}).

Through out this paper, we denote $\int_{\mathbb{R}^N}  \cdot  dx  $ by $\int_{\mathbb{R}^N}  \cdot $ and $H:=H^1(\mathbb{R}^N) \times H^1(\mathbb{R}^N) $. In order to introduce the conditions that $V(x)$ satisfies, we set $V^{-}(x):=\max \{-V(x), 0\}$ and
\begin{sequation*}
S: = \inf \left\{ {|\nabla u|_{{L^2}\left( {{\mathbb{R}^N}} \right)}^2:u \in {\mathcal{D}^{1,2}}\left( {{\mathbb{R}^N}} \right),|u{|_{{L^{{2^*}}}({\mathbb{R}^N})}} = 1} \right\}.
\end{sequation*}
We assuming the following hypotheses on $V(x)$:
\begin{itemize}
	\item[$(V_0$)] $ V^{-} \in L^{N/2}(\mathbb{R}^N) $ and  $ \int_{\mathbb{R}^N} {\arrowvert V^-(x) \arrowvert} ^{\frac{N}{2}}   <S $,
\end{itemize}
\begin{itemize}
	\item[$(V_1$)]$	V(x)\leq V_\infty:=\mathop {\lim }\limits_{\left| x \right| \to  + \infty } V(x) \quad a.e. \ x\in \mathbb{R}^N$ with $V_\infty>0$.
\end{itemize}
According to \cite{batista2018positive}, ${\left( {\int_{{\mathbb{R}^N}} {\left( {{{\left| {\nabla u} \right|}^2} + V(x){u^2}} \right)} } \right)^{\frac{1}{2}}}$ is a norm of in $H^1(\mathbb{R}^N)$. Therefore, we define 
\begin{sequation*}
{\left\| u \right\|^2}: = \int_{{\mathbb{R}^N}} {\left( {{{\left| {\nabla u} \right|}^2} + V(x){u^2}} \right)} .
\end{sequation*}

Asymptotic potentials ($V_\infty:=\mathop {\lim }\limits_{\left| x \right| \to  + \infty } V(x)$) have been extensively studied. For example, asymptotic positive potentials which mean $\mathop {\inf }\limits_{{\mathbb{R}^N}} V(x) > 0$ have been considered in \cite{benci1987positive,rabinowitz1992class} for the study of single nonlinear Schr{\"o}dinger equations. The articles \cite{chen2021normalized,kurata2021asymptotic,liu2016positive,osada2022existence,pomponio2010ground} studied nonlinear Schr{\"o}dinger systems with positive asymptotic potential. Among them, \cite{kurata2021asymptotic},\cite{osada2022existence} and \cite{pomponio2010ground} studied nonlinear Schr{\"o}dinger system with three waves interaction, that is
\begin{sequation}\label{three}
	\begin{cases}
			{ - \Delta {u_1} + {V_1}(x){u_1} = {{\left| {{u_1}} \right|}^{p - 1}}{u_1} + \beta {u_2}{u_3}},&{\ in\ {\mathbb{R}^N}}, \\ 
			{ - \Delta {u_2} + {V_2}(x){u_2} = {{\left| {{u_2}} \right|}^{p - 1}}{u_2} + \beta {u_1}{u_3}},&{\ in\ {\mathbb{R}^N}}, \\ 
			{ - \Delta {u_3} + {V_3}(x)u_3 = {{\left| {{u_3}} \right|}^{p - 1}}{u_3} + \beta {u_1}{u_2}},&{\ in\ {\mathbb{R}^N}}, \\ 
			{{u_1},{u_2},{u_3} \in {H^1}({\mathbb{R}^N})}. 
	\end{cases}
\end{sequation}
\cite{pomponio2010ground} used the central compactness lemma obtained that (\ref{three}) has a vector ground state solution when $\left| \beta  \right|$ sufficiently large. Pomponio also showed that the ground state converges to the scalar ground state as $\gamma\to 0$. \cite{kurata2021asymptotic} added some results of \cite{pomponio2010ground} when $ \beta >0$ was sufficiently small. While \cite{osada2022existence} studied the normalized solutions of (\ref{three}). When considering the sign-change of potentials, refer to \cite{batista2018positive,batista2018solutions}. 

In this paper, $V(x)$ can be nonconstant, nonradial and sign-changing which results in loss of compactness. Inspired by \cite{peng2017multiple,wu2019multiple}, we use the compactness of single equations (\ref{single_1}) and (\ref{single_2}) to restore the compactness of the system (\ref{key}). 
According to remark (\ref{two_ground_state}) below we know 
\begin{sequation}\label{single_1}
	- \Delta u + V(x)u = {\mu _1}\left| u \right|^{p-1}u,\quad u \in {H^1}\left( {{\mathbb{R}^N}} \right),
\end{sequation}
and \begin{sequation}\label{single_2}
	- \Delta u + V(x)u = {\mu _2}\left| u \right|^{p-1}u,\quad u \in {H^1}\left( {{\mathbb{R}^N}} \right),
\end{sequation}
respectively have a ground state solution which means the solution with the least energy among all solutions. We use the ground state solution $U$ of the equation (\ref{single_1}) and the ground state solution $V$ of the equation (\ref{single_2}) to find a perturbed solution of system (\ref{key}). Easy to know if $u_0$ is a solution of (\ref{single_1}), and $v_0$ is a solution of (\ref{single_2}) then $(u_0,0)$ and $(0,v_0)$ are two solutions of (\ref{key}) and ${I_\beta }({u_0},0),{I_\beta }(0,{v_0}) > 0$. Hence there is a $(u,v)\in H$ such that ${I_\beta }\left( {u,v} \right) \leq \min \left\{ {{I_\beta }\left( {{u_0},0} \right),{I_\beta }\left( {0,{v_0}} \right)} \right\}$. While the solution $ \left( {{{\overline u }_\beta },{{\overline v }_\beta }} \right)$ we find in this paper satisfies ${I_\beta }\left( {{{\overline u }_\beta },{{\overline v }_\beta }} \right) \to {I_\beta }\left( {{u_0},0} \right) + {I_\beta }\left( {0,{v_0}} \right) > {I_\beta }\left( {u,v} \right),\beta \to 0$, therefore $\left( {{{\overline u }_\beta },{{\overline v }_\beta }} \right)$ is a high energy solution.

Define $\left\| u \right\|_\infty^2: = \int_{\mathbb{R}^N}  {\left( {{{\left| {\nabla u} \right|}^2} + {V_\infty}{u^2}} \right)}$, then $\left\| u \right\|_\infty$ is a norm of $H^1(\mathbb{R}^N)$ by $V_\infty >0$.
Through a standard calculation one can know that solutions of (\ref{key}) are the critical points of
\begin{sequation}
		\begin{aligned}
			{I_\beta }(u,v) =& \frac{1}{2}\int_{{\mathbb{R}^N}} {\left( {{{\left| {\nabla u} \right|}^2} + {V(x)}{u^2}} \right)}  + \frac{1}{2}\int_{{\mathbb{R}^N}} {\left( {{{\left| {\nabla v} \right|}^2} + {V(x)}{v^2}} \right)}  - \frac{1}{p}{\mu _1}\left| u \right|_p^p - \frac{1}{p}\mu_2\left| v \right|_p^p - \frac{{3\beta }}{2}\int_{{\mathbb{R}^N}} {{u^2}v} .
		\end{aligned}
\end{sequation}

The following theorem is the main result of our paper.
\begin{theorem}\label{th1}
	Assume that $(V_0)$ and $(V_1)$ hold. For $\mu_i>0,i=1,2$ and $\beta>0$, there exists $\beta _0 >0$ such that system (\ref{key}) has a nontrivial solution $\left(\overline{u}_\beta , \overline{v}_\beta \right)$ if $0<\beta <\beta _0$. Let $\beta _n \in\left(0, \beta _0\right)$ be a sequence with $\beta _n \rightarrow 0$ as $n \rightarrow+\infty$. Then passing to a subsequence if necessary, $\left(\overline{u}_{\beta _n}, \overline{v}_{\beta _n}\right) \rightarrow\left(U, V\right)$ in $H$, where $U$ and $V$ are ground states of (\ref{single_1}) and (\ref{single_2}) respectively. 
\end{theorem}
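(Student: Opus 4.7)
The plan is to treat the coupled problem as a perturbation of the decoupled case $\beta=0$, where $I_0(u,v)=J_1(u)+J_2(v)$ with $J_i(w):=\tfrac{1}{2}\|w\|^2-\tfrac{\mu_i}{p}|w|_p^p$. By the remark cited above, $J_1$ and $J_2$ admit ground states $U$ and $V$ of least energies $m_1:=J_1(U)$ and $m_2:=J_2(V)$, so $(U,V)$ is a critical point of $I_0$ at level $m_1+m_2$, strictly higher than the two semi-trivial critical points $(U,0)$ and $(0,V)$ of $I_\beta$, which have energies $m_1$ and $m_2$. The goal is to produce, for each small $\beta>0$, a critical point $(\overline u_\beta,\overline v_\beta)$ of $I_\beta$ near $(U,V)$ with $I_\beta(\overline u_\beta,\overline v_\beta)\to m_1+m_2$, which is then high relative to the semi-trivial levels.

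For existence I would set up a two-parameter linking using $\phi:[0,T]^2\to H$, $\phi(t,s):=(tU,sV)$. A direct computation yields
\[
I_\beta(tU,sV)=\tfrac{t^2}{2}\|U\|^2+\tfrac{s^2}{2}\|V\|^2-\tfrac{\mu_1 t^p}{p}|U|_p^p-\tfrac{\mu_2 s^p}{p}|V|_p^p-\tfrac{3\beta t^2 s}{2}\!\int_{\mathbb{R}^N}\! U^2 V,
\]
so at $\beta=0$ the map $I_0\circ\phi$ has a unique strict interior maximum at $(1,1)$ of value $m_1+m_2$, while on $\partial[0,T]^2$ it is bounded by $\max(m_1,m_2)<m_1+m_2$ for $T$ large; this picture persists for small $\beta$ by continuity. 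Defining
\[
c_\beta:=\inf_{\gamma\in\Gamma}\max_{(t,s)\in[0,T]^2}I_\beta(\gamma(t,s)),\quad \Gamma:=\{\gamma\in C([0,T]^2,H):\gamma\equiv\phi\text{ on }\partial[0,T]^2\},
\]
a standard linking theorem produces a Palais--Smale sequence $(u_n,v_n)$ at level $c_\beta$, and the competitor $\phi$ shows $c_\beta\to m_1+m_2$ as $\beta\to 0^+$. Boundedness of $(u_n,v_n)$ follows from the standard combination $pI_\beta-\langle I_\beta',\cdot\rangle$ once the cubic coupling is absorbed by interpolation and Sobolev embedding for $\beta$ small.

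The main obstacle is to pass to the limit: the nonradial, sign-changing potential $V(x)$ breaks the usual compactness, so mass of the PS sequence may escape to infinity. Following \cite{peng2017multiple,wu2019multiple}, I would use the compactness of the scalar equations (\ref{single_1}) and (\ref{single_2}) to control the bubbles: any escaped mass condenses into solutions of the asymptotic system with constant potential $V_\infty$, which by the scalar compactness can be decomposed into scalar bubbles carrying energy at least $m_i^\infty$, the ground-state energy of the scalar asymptotic problem. Hypothesis $(V_1)$ combined with a Nehari-manifold comparison gives $m_i^\infty\geq m_i$. Since $c_\beta\to m_1+m_2$ lies strictly below $m_1+m_2+\min(m_1^\infty,m_2^\infty)$ for small $\beta$, energy accounting rules out all bubbles and yields strong convergence $(u_n,v_n)\to(\overline u_\beta,\overline v_\beta)$ in $H$; the limit is nontrivial because $I_\beta(\overline u_\beta,\overline v_\beta)=c_\beta>\max(m_1,m_2)$.

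For the asymptotic statement, fix $\beta_n\to 0^+$ with associated solutions $(\overline u_{\beta_n},\overline v_{\beta_n})$. The same boundedness estimate applies uniformly in $n$, so a subsequence converges weakly in $H$ to some $(u^\ast,v^\ast)$, which is a critical point of $I_0$; hence $u^\ast$ solves (\ref{single_1}) and $v^\ast$ solves (\ref{single_2}). The bound $J_1(u^\ast)+J_2(v^\ast)\leq\liminf_n I_{\beta_n}(\overline u_{\beta_n},\overline v_{\beta_n})=m_1+m_2$, combined with the no-bubble argument applied again to $\{\beta_n\}$ to exclude either component from vanishing, forces $J_1(u^\ast)=m_1$ and $J_2(v^\ast)=m_2$. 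Thus $u^\ast$ and $v^\ast$ are ground states, so $(u^\ast,v^\ast)=(U,V)$; strong convergence in $H$ then follows from the convergence of norms.
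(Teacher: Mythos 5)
Your overall strategy (a two--parameter surface $(t,s)\mapsto(tU,sV)$, a minimax level $c_\beta$ tending to $c_1+c_2$, and convergence of the solutions to a pair of scalar ground states as $\beta\to0$) matches the paper's in spirit, but two steps in your outline have genuine gaps, and they are precisely the points where the paper does something different.

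First, the lower bound on the level. The competitor $\phi$ only gives $c_\beta\leq m_\beta\leq c_1+c_2$; it does not ``show $c_\beta\to m_1+m_2$'', and without a lower bound strictly above the boundary value $\max(m_1,m_2)$ you do not even know that the minimax class is nondegenerate, let alone that the solution you produce is a high--energy one. You would need to show that \emph{every} admissible $\gamma$ with $\gamma=\phi$ on $\partial Q$ satisfies $\max_Q I_0(\gamma)\geq c_1+c_2$; the paper does this in Lemma \ref{Minimax} by a Brouwer degree argument: the map $r(\gamma)$ built from the Nehari--type quotients $J_3,J_4$ has degree one, so every admissible surface must cross $\mathcal N_1\times\mathcal N_2$. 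This intersection argument is the missing ingredient in your first step and cannot be waved through as ``a standard linking theorem''.

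Second, and more seriously, the compactness. Your energy accounting ``$c_\beta<m_1+m_2+\min(m_1^\infty,m_2^\infty)$ rules out all bubbles'' only excludes configurations in which the weak limit already carries energy at least $m_1+m_2$. At the level $c_1+c_2$ (which is a \emph{high} level, not below the first noncompactness threshold) nothing prevents a Palais--Smale sequence from having, say, $v_n\to0$ strongly while the $u$--component carries the whole energy $c_1+c_2\geq c_1^*$ and splits into a weak limit plus a bubble of the limit equation; such scenarios are not excluded by comparing with $\min(m_1^\infty,m_2^\infty)$. For the same reason your nontriviality argument fails: $I_\beta(\overline u_\beta,\overline v_\beta)=c_\beta>\max(m_1,m_2)$ does not prevent $\overline v_\beta=0$ with $\overline u_\beta$ an excited state of (\ref{single_1}). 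The paper circumvents both problems by \emph{localizing}: it works inside a neighborhood $X^d$ of the compact set $X=X_1\times X_2$ of product ground states, with $d<d_1=\frac12\min_i\sqrt{2pc_i/(p-2)}$, so that weak limits lie in $X^{2d}$ and both components are automatically bounded away from zero (Lemma \ref{buchongyinli}); the local Palais--Smale property then follows from a squeeze between $I_0(u,v)\geq c_0$ for limit critical points and weak lower semicontinuity (Lemma \ref{PS_condition}), and a pseudo--gradient deformation confined to $X^d$ (Lemmas \ref{Lower bound of gradient}--\ref{PS_sequence}) produces the Palais--Smale sequence inside $X^d$ in the first place. Without this localization (or a genuine substitute, e.g.\ a full splitting lemma for the system together with arguments excluding semitrivial limits), your plan does not close. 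A minor additional point: ground states need not be unique, so in the asymptotic statement you may only conclude convergence to \emph{some} pair of ground states, as the paper does, not to the specific $(U,V)$ used in the construction.
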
 
	
In Section 2, we present some necessary notations and give some preparations on problem (\ref{key}). The proof of Theorem \ref{th1} will be given in Section 3. Throughout the paper we use the notation $\left|  \cdot  \right|_p$ to denote the norm of $L^p(\mathbb{R}^N)$ and $\rightharpoonup$ to donate weak convergence in $H^1(\mathbb{R}^N)$ or $H$. 
	
\section{Some preparations} 
In this section, we collect several results which will be used in the rest discussion. First, let's recall the basic Brezis-Lieb inequality in \cite{brezis1983relation}.
\begin{lemma}\label{B-L_lemma}
	Suppose $\{u_n\} \in H^1(\mathbb{R}^N)$ is a bounded sequence, $u_n \rightharpoonup u $ in $H^1(\mathbb{R}^N)$, then for $2<s<2^\ast$, we have
	\begin{sequation*}
		\begin{gathered}
			\mathop {\lim }\limits_{n \to \infty } \int_{{\mathbb{R}^N}} {\left( {{{\left| {\nabla {u_n}} \right|}^2} - {{\left| {\nabla u} \right|}^2} - {{\left| {\nabla \left( {{u_n} - u} \right)} \right|}^2}} \right)}  = 0 \hfill ,
			\mathop {\lim }\limits_{n \to \infty } \int_{{\mathbb{R}^N}} {\left( {{{\left| {{u_n}} \right|}^s} - {{\left| u \right|}^s} - {{\left| {{u_n} - u} \right|}^s}} \right)}  = 0 ,\hfill \\
			\mathop {\lim }\limits_{n \to \infty } \int_{{\mathbb{R}^N}} {\left( {V(x)u_n^2 - V(x){u^2} - V(x){{\left( {{u_n} - u} \right)}^2}} \right)}  = 0. \hfill \\ 
		\end{gathered} 
	\end{sequation*}
\end{lemma}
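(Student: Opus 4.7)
The plan is to reduce each of the three splitting identities to an algebraic expansion combined with either weak convergence or the classical Brezis--Lieb scalar argument. Throughout, set $w_n := u_n - u$, so that $\{w_n\}$ is bounded in $H^1(\mathbb{R}^N)$, $w_n \rightharpoonup 0$, and, after passing to a subsequence if needed, $w_n \to 0$ pointwise a.e.

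For the gradient identity, I would simply expand
\[
|\nabla u_n|^2 = |\nabla w_n + \nabla u|^2 = |\nabla w_n|^2 + 2\nabla w_n \cdot \nabla u + |\nabla u|^2,
\]
so the integrand of the claimed identity reduces to $2\nabla w_n \cdot \nabla u$. Testing $w_n \rightharpoonup 0$ against the fixed $u$ yields $\int \nabla w_n \cdot \nabla u \to 0$. The identity involving $V(x)$ is treated analogously: expansion produces integrand $2V(x) w_n u$, and to see that this integrates to zero I would use that $(V_0)$ guarantees that the $V$-weighted norm $\|\cdot\|$ is equivalent to the standard $H^1$ norm. Since equivalent norms generate the same weak topology, pairing $w_n \rightharpoonup 0$ with $u$ in $\|\cdot\|$ gives $\int \nabla w_n \cdot \nabla u + \int V(x) w_n u \to 0$, and subtracting the gradient limit just obtained closes this case.

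The $L^s$ identity is the genuine Brezis--Lieb step. I would invoke the elementary inequality that, for every $\varepsilon > 0$, there exists $C_\varepsilon > 0$ with
\[
\bigl| |a+b|^s - |a|^s - |b|^s \bigr| \leq \varepsilon |a|^s + C_\varepsilon |b|^s \qquad \text{for all } a,b \in \mathbb{R},
\]
and apply it pointwise with $a = w_n(x)$ and $b = u(x)$. Writing $g_n := \bigl| |u_n|^s - |w_n|^s - |u|^s \bigr|$ and $h_n := (g_n - \varepsilon |w_n|^s)_+$, the inequality yields $h_n \leq C_\varepsilon |u|^s \in L^1(\mathbb{R}^N)$ (because $u \in H^1 \hookrightarrow L^s$ for $2 < s < 2^*$), and $h_n \to 0$ a.e. since $w_n \to 0$ a.e. Dominated convergence then gives $\int h_n \to 0$, and combining with the Sobolev bound $\sup_n \int |w_n|^s \leq M$ we obtain $\limsup_n \int g_n \leq \varepsilon M$; letting $\varepsilon \downarrow 0$ finishes the argument.

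The principal obstacle is the $L^s$ step: unlike the two quadratic identities, for which a single expansion plus weak convergence suffices, the nonlinearity of $t \mapsto |t|^s$ forces the $\varepsilon$-splitting inequality together with a truncation-and-dominated-convergence trick. The $V(x)$ identity is a smaller sticking point, because $V$ may change sign and need not be bounded above; the clean way through is the norm equivalence supplied by $(V_0)$, which recasts the cross-term integral as an inner-product pairing and lets weak $H^1$-convergence take over.
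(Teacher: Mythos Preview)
Your proof is correct. The main departure from the paper is in the $V(x)$-weighted identity: the paper treats it by applying Theorem~2 of Br\'ezis--Lieb to $j(t)=V(x)t^2$, checking a pointwise splitting estimate $|j(a+b)-j(a)|\le \varepsilon\varphi_\varepsilon(a)+\psi_\varepsilon(b)$ and then invoking the abstract generalized-dominated-convergence framework. You instead exploit that this term is quadratic: the integrand reduces exactly to $2V(x)w_n u$, and its integral vanishes once the $V$-weighted bilinear form is recognized (via $(V_0)$--$(V_1)$) as a continuous inner product on $H^1$, so that weak convergence alone suffices. Your route is more elementary and sidesteps any delicacy arising from the sign change of $V$; the paper's route is uniform in spirit with the $L^s$ case and would extend to nonquadratic weights, but is heavier than needed here. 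For the gradient and $L^s$ identities you simply spell out what the paper cites as Theorem~1 of Br\'ezis--Lieb, so those parts are substantively the same.
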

\begin{proof}
		We only prove the last formula, others can be proved by Theorem 1 in \cite{brezis1983relation}. Let $ j(u)=V(x)u^2$ and $g_n={u_n} - u$ then 
	$$
	\begin{aligned}
		\left| {j(a + b) - j(a)} \right| &= \left| {2V(x)ab + V(x){b^2}} \right| \hfill \\&
		\leq  \varepsilon V(x){a^2} + \left( {1 + \frac{1}{\varepsilon }} \right)V(x){b^2} \hfill \\&
		= \varepsilon {\varphi _\varepsilon }(a) + {\psi _\varepsilon }(b), \hfill \\ 
	\end{aligned} 
	$$
	We have
	$$
	\begin{gathered}
		\int_{{\mathbb{R}^N}} {{\varphi _\varepsilon }({g_n}) = \int_{{\mathbb{R}^N}} {V(x){{\left( {{u_n} - u} \right)}^2}} }  \leq  C{\left\| {{u_n}} \right\|^2} \leq  \widetilde C, \hfill \\
		\int_{{\mathbb{R}^N}} {{\psi _\varepsilon }(u)}  = \int_{{\mathbb{R}^N}} {\left( {1 + \frac{1}{\varepsilon }} \right)V(x){u^2}}  < \infty \ for \ all\ \varepsilon  > 0. \hfill \\ 
	\end{gathered} 
	$$
	According to Theorem 2 in \cite{brezis1983relation}, the last formula is proved.
	
\end{proof}

Let 
\begin{sequation}
\overline S_p : = \mathop {\inf }\limits_{u \in {H^1}({\mathbb{R}^N}),{{\left| u \right|}_p} = 1} \int_{{\mathbb{R}^N}} {\left( {{{\left| {\nabla u} \right|}^2} + {V_\infty }{u^2}} \right)} ,
\end{sequation}
It is well known that $\overline S_p$ can be obtained by some $u \in H^1(\mathbb{R}^N)$ (see, for example \cite[Theorem1.34]{willem1997minimax}). 

Firstly, we define the corresponding energy functionals of single equations (\ref{single_1}) and (\ref{single_2})
\begin{sequation*}
	J_i(u):=\frac{1}{2}\int_{{\mathbb{R}^N}} {\left( {{{\left| {\nabla u} \right|}^2} + V(x){u^2}} \right) - \frac{1}{p}\int_{{\mathbb{R}^N}}\mu_i {{{\left| u \right|}^p}} }, \quad i=1,2.
\end{sequation*}
Let $c_i^*: = \mu_i\left( {\frac{1}{2} - \frac{1}{p}} \right){\left( {\frac{\overline S_p}{{{\mu _i}}}} \right)^{\frac{p}{{p - 2}}}},i=1,2$. Inspired by \cite{benci1987positive,ding1989existence}, we have the following lemma.
\begin{lemma}\label{compactness1}
	Under assumption $(V_0)$ and $(V_1)$, any sequence $\{(u_n)\}\in H^1(\mathbb{R}^N)$, such that
	\begin{sequation}
		d:=\mathop {\sup }\limits_n {\text{ }}{J_{i}}({u_n}) < c^\ast,\quad J_{i}^\prime ({u_n}) \to 0,\quad i=1,2
	\end{sequation}
	contains a convergent subsequence.
\end{lemma}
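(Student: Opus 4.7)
The plan is to follow a concentration-compactness splitting: extract a weak limit of $\{u_n\}$, peel it off, identify the residual as a Palais--Smale sequence for the problem at infinity, and then exclude both vanishing and escape to infinity using the strict energy threshold $d<c^*$. First I would establish boundedness in $H^1(\mathbb{R}^N)$: combining $J_i(u_n)-\frac{1}{p}\langle J_i'(u_n),u_n\rangle=(\frac{1}{2}-\frac{1}{p})\|u_n\|^2$ with the hypotheses gives $\|u_n\|^2\le C(1+\|u_n\|)$, where $\|\cdot\|$ is equivalent to the standard $H^1$-norm thanks to $(V_0)$. Passing to a subsequence, $u_n\rightharpoonup u$ in $H^1$, and the usual test-function argument shows $u$ is a critical point of $J_i$, so $J_i(u)=(\frac{1}{2}-\frac{1}{p})\|u\|^2\ge 0$.

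Setting $w_n:=u_n-u$, Lemma \ref{B-L_lemma} would give $J_i(u_n)=J_i(u)+J_i(w_n)+o(1)$ and $J_i'(w_n)\to 0$ in $H^{-1}$, so $\limsup_n J_i(w_n)<c^*-J_i(u)\le c^*$ while $w_n\rightharpoonup 0$. To pass to the limit functional $J_{\infty,i}(u):=\frac{1}{2}\|u\|_\infty^2-\frac{\mu_i}{p}|u|_p^p$, I would use $(V_1)$ and split $\mathbb{R}^N$ into $|x|\le R$ (where $w_n\to 0$ in $L^2_{\mathrm{loc}}$) and $|x|>R$ (where $|V-V_\infty|$ is uniformly small), obtaining $\int(V_\infty-V(x))w_n^2\to 0$; analogous bookkeeping for the derivative terms shows $\{w_n\}$ is a Palais--Smale sequence for $J_{\infty,i}$ at a level strictly below $c^*$.

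The conclusion then comes from a dichotomy via Lions' concentration-compactness lemma applied to $\{w_n\}$. In the vanishing case $\sup_y\int_{B_1(y)}w_n^2\to 0$, Lions gives $w_n\to 0$ in $L^p(\mathbb{R}^N)$, and $\langle J_{\infty,i}'(w_n),w_n\rangle=o(1)$ then forces $\|w_n\|_\infty\to 0$, so $u_n\to u$ strongly in $H^1$. In the non-vanishing case there are $y_n$ and $\delta>0$ with $\int_{B_1(y_n)}w_n^2\ge\delta$; since $w_n\to 0$ in $L^2_{\mathrm{loc}}$ we must have $|y_n|\to\infty$, so the translates $\tilde w_n(x):=w_n(x+y_n)$ remain bounded and converge weakly to a nontrivial critical point $\tilde w$ of $J_{\infty,i}$ (using translation invariance and $V(x+y_n)\to V_\infty$ a.e.). The Nehari-manifold characterization of the least-energy level in terms of $\overline{S}_p$ yields $J_{\infty,i}(\tilde w)\ge c^*$, and a second Brezis--Lieb split of $\{\tilde w_n\}$ together with $\langle J_{\infty,i}'(\tilde w_n-\tilde w),\tilde w_n-\tilde w\rangle\to 0$ produces $\liminf_n J_{\infty,i}(\tilde w_n)\ge c^*$, contradicting the strict upper bound obtained above.

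The main technical obstacle I anticipate is precisely this transfer from $J_i$ to $J_{\infty,i}$: controlling $\int(V_\infty-V)w_n^2$ and the corresponding terms in the derivative requires both the decay of $V-V_\infty$ at infinity granted by $(V_1)$ and hypothesis $(V_0)$, which prevents the negative part $V^-$ from creating localized concentration that would survive passage to the weak limit; the Nehari identification of $c^*$ with the ground-state energy of the limit problem is the other place where the precise form of the threshold enters.
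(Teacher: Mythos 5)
Your argument is correct, but its second half takes a genuinely different and heavier route than the paper. You agree with the paper through the first stage: boundedness, extraction of a weak limit $u$ which is a critical point with $J_i(u)\ge 0$, the Brezis--Lieb splitting of Lemma \ref{B-L_lemma}, and the use of $(V_1)$ to replace $V$ by $V_\infty$ on the residual $w_n=u_n-u$. At that point the paper closes the argument without any concentration-compactness machinery: from $\langle J_i'(u_n),u_n\rangle\to 0$ and the identity satisfied by $u$ it gets the single scalar relation $\|w_n\|_\infty^2-\mu_i|w_n|_p^p\to 0$, sets both quantities $\to b$, and applies the defining inequality $\|w_n\|_\infty^2\ge \overline S_p\,|w_n|_p^2$ directly to $w_n$ to obtain the dichotomy $b=0$ or $b\ge \mu_i(\overline S_p/\mu_i)^{p/(p-2)}$; the latter contradicts (\ref{threshold}) and $d<c_i^*$ at once, so no translations, no Lions lemma, and no limit-problem critical point are needed. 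Your route instead invokes the vanishing/non-vanishing alternative, translates by $|y_n|\to\infty$, and compares with the least-energy level of $J_{\infty,i}$; this is valid (and your bound $J_{\infty,i}(\tilde w)\ge c_i^*$ for a nontrivial critical point is exactly the same $\overline S_p$ computation the paper performs), but it forces you to prove the stronger statement $J_{\infty,i}'(w_n)\to 0$ in the dual norm, i.e.\ $\int_{\mathbb{R}^N}(V_\infty-V)w_n\varphi\to 0$ uniformly over $\|\varphi\|\le 1$, which needs a truncation of $V^-\in L^{N/2}$ since $w_n\to 0$ only in $L^s_{\mathrm{loc}}$ for $s<2^*$, plus the Brezis--Lieb splitting of the derivative for the translated sequence; the paper's proof only ever uses the scalar pairing $\langle J_i'(u_n),u_n\rangle$ and the $L^2$ estimate of the claim, which is why it is shorter. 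In short: your proof works and is the ``standard'' concentration-compactness proof, while the paper trades Lions' lemma for a direct Sobolev-quotient dichotomy on the residual, which is more elementary and better matched to the specific threshold $c_i^*$.
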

\begin{proof}
It's easy to know $\left\{ {\left( {{u_n}} \right)} \right\}$ is bounded in $H^1(\mathbb{R}^N)$. Assume $\left\| u \right\| \leq C_1$ and ${\left| u \right|_p}\leq { C_2}, 2\leq p\leq 2^\ast$. Going if necessary to a subsequence, we can assume that
\begin{sequation*}
	\begin{aligned}
		{u_n} \rightharpoonup u\  in{\text{ }}{H^1}\left( {{\mathbb{R}^N}} \right);\ {u_n} \to u\  in \ L_{loc}^p\left( {{\mathbb{R}^N}} \right);\ {u_n} \to u\  a.e.\ on{\text{ }}{\mathbb{R}^N}.
	\end{aligned}
\end{sequation*}
By a standard argument, we have 
\begin{sequation}\label{weak_solution}
	{\int_{{\mathbb{R}^N}} {\left( {{{\left| {\nabla {u}} \right|}^2} + V(x)u^2} \right) = \int_{{\mathbb{R}^N}} {{\mu _i}{{\left| {{u}} \right|}^p}} } }.
\end{sequation}
Assuming $w_n=u_n-u$, we claim that $\int_{{\mathbb{R}^N}} {\left( {{V_\infty } - V(x)} \right)} w_n^2\to 0$.
In fact, by $V_\infty:=\mathop {\lim }\limits_{\left| x \right| \to  + \infty } V(x)$, there is a $R>0$ such that $\left| {{V_\infty } - V(x)} \right| < \frac{\varepsilon }{2{ C_2}},x \in {\mathbb{R}^N}\backslash {B_R}(0)$. Then 
\begin{sequation*}
\int_{{\mathbb{R}^N}\backslash {B_R}(0)} {\left| {\left( {{V_\infty } - V(x)} \right)w_n^2} \right|} <\frac{\epsilon}{2}.
\end{sequation*}
Hence combing with $w_n \to 0$ in $L^2_{loc}(\mathbb{R}^N)$, the claim is proved.

Assuming $J_i(u_n)\to c \leq d$, Lemma \ref{B-L_lemma} and the claim above leads to 
\begin{sequation}\label{value}
	\begin{aligned}
		{J_i}({u_n}) 
		= {J_i}(u) + \frac{1}{2}{\left\| {{w_n}} \right\|_\infty^2} - \frac{1}{p}\int_{{\mathbb{R}^N}}\mu_i {{{\left| {{w_n}} \right|}^p}}  + o(1) \to c.
	\end{aligned} 
\end{sequation}
However, by
\begin{sequation*}
{J_i}(u) = \frac{1}{2}\int_{{\mathbb{R}^N}} {\left( {{{\left| {\nabla u} \right|}^2} + V(x){u^2}} \right)}  - \frac{1}{p}\int_{{\mathbb{R}^N}} {{\mu _i}{{\left| u \right|}^p}}  = \left( {\frac{1}{2} - \frac{1}{p}} \right)\int_{{\mathbb{R}^N}} {{\mu _i}{{\left| u \right|}^p}}  \geq 0
\end{sequation*}
and (\ref{value}) we obtain
\begin{sequation}\label{threshold}
	\frac{1}{2}{\left\| {{w_n}} \right\|_\infty^2} - \frac{1}{p}\int_{{\mathbb{R}^N}} \mu_i{{{\left| {{w_n}} \right|}^p}} \leq c.
\end{sequation}

Since $\left\langle {J_i^\prime ({u_n}),{u_n}} \right\rangle  \to 0$, by (\ref{weak_solution}) we obtain 
\begin{sequation*}
	\left\| {{w_n}} \right\|_\infty^2- \int_{{\mathbb{R}^N}} {{\mu _i}{{\left| {{w_n}} \right|}^p}}  \to \left\langle {J_i^\prime (u),u} \right\rangle  = 0.
\end{sequation*}
Passing to a subsequence if necessary we may therefore assume that
\begin{sequation*}
\left\| {{w_n}} \right\|_\infty^2 \to b,\quad \int_{{\mathbb{R}^N}} {{\mu _i}{{\left| {{w_n}} \right|}^p}} \to b.
\end{sequation*}
By the Sobolev inequality, we have ${\left\| {{w_n}} \right\|^2} \geq \overline S_p \left| {{w_n}} \right|_p^2$ and so $b\geq \overline S_p {\left( {\frac{b}{{{\mu _i}}}} \right)^{\frac{2}{p}}}$. Euther $b=0$ or $b \geq \mu_i{\left( {\frac{\overline S_p}{{{\mu _i}}}} \right)^{\frac{p}{{p - 2}}}}$. If $b=0$, the proof is complete. Assume $b \geq \mu_i{\left( {\frac{\overline S_p}{{{\mu _i}}}} \right)^{\frac{p}{{p - 2}}}}$, by (\ref{threshold}) we obtain from
\begin{sequation*}
c_i^* = \mu_i\left( {\frac{1}{2} - \frac{1}{p}} \right){\left( {\frac{{\overline S_p }}{{{\mu _i}}}} \right)^{\frac{p}{{p - 2}}}} \leq \left( {\frac{1}{2} - \frac{1}{p}} \right)b \leq c \leq d < {c_i^*},
\end{sequation*}
a contradiction.
\end{proof}
Set $c_{i}:=\mathop {\inf }\limits_{u \in {\mathcal{N}_i}} {J_{i}}(u)$ where $\setlength{\abovedisplayskip}{3pt}
	\setlength{\belowdisplayskip}{3pt}
	{\mathcal{N}_{i}}: = \left\{ {u \in {H^1}({\mathbb{R}^N})\backslash \{ 0\} :\left\langle {J_{i}^\prime (u),u} \right\rangle  = 0} \right\}, i=1,2.$ Then we have the following lemma.
\begin{lemma}\label{shexianzuida}
	Under assumption $(V_0)$ and $(V_1)$,
	\begin{sequation}\label{shexian}
		c_i=\mathop {\inf }\limits_{H\backslash \{ 0\} } \mathop {\max }\limits_{t > 0} {J_i }\left( {tu} \right)>0.
	\end{sequation}
\end{lemma}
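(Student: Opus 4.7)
The plan is to establish the standard fibering/Nehari correspondence for this superlinear problem and then use Sobolev embedding to bound the Nehari level away from zero.

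First I would fix $u \in H^1(\mathbb{R}^N)\setminus\{0\}$ and study the real function
\begin{equation*}
    g(t) := J_i(tu) = \tfrac{t^2}{2}\|u\|^2 - \tfrac{t^p}{p}\mu_i|u|_p^p, \qquad t>0.
\end{equation*}
Since $p>2$, a direct computation of $g'(t) = t\|u\|^2 - t^{p-1}\mu_i|u|_p^p$ shows that $g$ has exactly one positive critical point
\begin{equation*}
    t_u = \left(\frac{\|u\|^2}{\mu_i|u|_p^p}\right)^{\!1/(p-2)},
\end{equation*}
at which $g''(t_u) = (2-p)\|u\|^2 < 0$. So $g$ attains its unique positive maximum at $t_u$, and by construction $\langle J_i'(t_u u), t_u u\rangle = 0$, i.e. $t_u u \in \mathcal{N}_i$. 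This gives both inequalities: on the one hand $\max_{t>0} J_i(tu) = J_i(t_u u) \ge c_i$ for every $u\neq 0$; on the other hand, for $w \in \mathcal{N}_i$ the uniqueness above forces $t_w=1$, so $\max_{t>0}J_i(tw) = J_i(w)$. Taking infima yields the equality in \eqref{shexian}.

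For the strict positivity of $c_i$, I would exploit the Nehari identity $\|w\|^2 = \mu_i |w|_p^p$ valid for $w \in \mathcal{N}_i$. By the Sobolev embedding $H^1(\mathbb{R}^N)\hookrightarrow L^p(\mathbb{R}^N)$ (which is continuous because $2<p<2^*$), combined with the equivalence of the norm $\|\cdot\|$ with the standard $H^1$-norm guaranteed by $(V_0)$, there exists $C>0$ with $|w|_p \le C\|w\|$. Plugging this in gives $\|w\|^2 \le \mu_i C^p \|w\|^p$, hence
\begin{equation*}
    \|w\| \ge \left(\tfrac{1}{\mu_i C^p}\right)^{\!1/(p-2)} =: \delta > 0 \qquad \text{for all } w \in \mathcal{N}_i.
\end{equation*}
Since on $\mathcal{N}_i$ one has $J_i(w) = \left(\tfrac{1}{2}-\tfrac{1}{p}\right)\|w\|^2$, this yields $c_i \ge \left(\tfrac{1}{2}-\tfrac{1}{p}\right)\delta^2 > 0$.

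No single step looks genuinely hard; the main care is just to verify that $t_u$ is really the unique positive maximizer of $g$ and that the lower bound on $\|w\|$ depends only on $\mu_i$, $p$, and the Sobolev constant (so that it is uniform over $\mathcal{N}_i$). The use of $(V_0)$ is only implicit, in justifying that $\|\cdot\|$ is a norm equivalent to the standard one so that the Sobolev embedding applies directly.
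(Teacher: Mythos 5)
Your proof is correct and follows essentially the same route as the paper: the fibering map $t\mapsto J_i(tu)$ with its unique positive maximizer $t_u=\bigl(\|u\|^2/(\mu_i|u|_p^p)\bigr)^{1/(p-2)}$ lying on $\mathcal{N}_i$, from which both inequalities of the equality in \eqref{shexian} follow. The only difference is that you spell out the final deduction (the two-sided comparison and the Sobolev lower bound $\|w\|\ge\delta$ on $\mathcal{N}_i$ giving $c_i>0$), which the paper leaves as ``easy to deduce.''
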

\begin{proof}
	 Let
\begin{sequation*}
h(t) = {J_i}(tu) = \frac{{{t^2}}}{2}\left\| u \right\|^2 - \frac{{{t^p}}}{p}{\mu _i}\left| u \right|_p^p.
\end{sequation*}
Then for $t>0$, ${h^\prime }(t) = 0 \Leftrightarrow tu \in \mathcal{N}$.
For any $u\in H^1(\mathbb{R}^N)$ with $u\ne 0$, there exist a unique $t_i>0$ such that
\begin{sequation*}
	\mathop {\max }\limits_{t > 0} h(t) = \mathop {\max }\limits_{t > 0} {J_i}(tu) = {J_i}({t_i}u) = \left( {\frac{1}{2} - \frac{1}{p}} \right)t_i^p{\mu _i}\left| u \right|_p^p,
\end{sequation*}
where ${t_i} = {\left( {\frac{{{{\left\| u \right\|}^2}}}{{{\mu _i}\left| u \right|_p^p}}} \right)^{\frac{1}{{p - 2}}}} > 0$. Then, it is easy to deduce (\ref{shexian}).
\end{proof}

Define 
\begin{sequation}\label{X_i}
	{X_i} = \left\{ {u \in {H^1}\left( {{\mathbb{R}^N}} \right):J_i^\prime (u) = 0,{J_i}(u) = {c_i}} \right\},i=1,2,
\end{sequation}
When $u\in X_i$, a simple calculation shows that $\left\| u \right\| = \sqrt {\frac{{2p{c_i}}}{{p - 2}}} ,i = 1,2$. We have the following lemma.
\begin{lemma}\label{compact}
	There holds $c_i<c^\ast$, and therefore $X_i$ is compact in $H^1(\mathbb{R}^N)$.
\end{lemma}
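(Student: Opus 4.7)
The plan is to first establish the strict inequality $c_i < c_i^*$ by using a well-chosen test function on the Nehari manifold, and then read off the compactness of $X_i$ as an immediate consequence of the Palais--Smale type Lemma \ref{compactness1}.

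For the first step I would exploit the minimax characterization $c_i = \inf_{u \neq 0} \max_{t > 0} J_i(tu)$ provided by Lemma \ref{shexianzuida}. Let $\phi \in H^1(\mathbb{R}^N)$ be a minimizer for $\overline S_p$, normalized so that $|\phi|_p = 1$ and $\|\phi\|_\infty^2 = \overline S_p$, and chosen strictly positive by Schwarz symmetrization. Using the explicit maximizer $t_i(\phi) = \bigl(\|\phi\|^2/(\mu_i |\phi|_p^p)\bigr)^{1/(p-2)}$ from the proof of Lemma \ref{shexianzuida}, a direct computation yields
$$\max_{t > 0} J_i(t\phi) = \left(\frac{1}{2} - \frac{1}{p}\right) \frac{\|\phi\|^{2p/(p-2)}}{\mu_i^{2/(p-2)}}.$$
Hypothesis $(V_1)$ gives $V(x) \le V_\infty$ a.e., hence $\|\phi\|^2 \le \|\phi\|_\infty^2 = \overline S_p$. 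Since $\phi$ is strictly positive and $V \not\equiv V_\infty$ on a set of positive measure (the situation of genuine interest), this integral inequality is strict, so
$$c_i \;\le\; \max_{t > 0} J_i(t\phi) \;<\; \left(\frac{1}{2} - \frac{1}{p}\right) \frac{\overline S_p^{\,p/(p-2)}}{\mu_i^{2/(p-2)}} \;=\; c_i^*.$$

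The compactness of $X_i$ then follows immediately. For any sequence $\{u_n\} \subset X_i$, definition (\ref{X_i}) gives $J_i(u_n) \equiv c_i$ and $J_i'(u_n) = 0$, so in particular $\sup_n J_i(u_n) = c_i < c_i^*$. Lemma \ref{compactness1} therefore extracts a subsequence converging strongly in $H^1(\mathbb{R}^N)$ to some limit $u$, and continuity of $J_i$ and $J_i'$ places $u \in X_i$. This gives sequential compactness of $X_i$, which on the metric space $H^1(\mathbb{R}^N)$ is equivalent to compactness.

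The main obstacle is the strict inequality $c_i < c_i^*$. Under $(V_0)$ and $(V_1)$ alone one only gets $c_i \le c_i^*$, and strictness hinges on an effective separation of $V$ from $V_\infty$ on a set of positive measure; otherwise the problem degenerates into the translation-invariant limit problem and compactness genuinely fails. The subtle point is to upgrade the pointwise a.e. inequality $V \le V_\infty$ to the strict integral inequality $\|\phi\|^2 < \|\phi\|_\infty^2$, which is precisely where the positivity (and full support) of the $\overline S_p$-minimizer $\phi$ is essential.
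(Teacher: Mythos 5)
Your argument is correct and essentially identical to the paper's proof: both test $c_i$ against a positive minimizer of $\overline S_p$ through the minimax characterization of Lemma \ref{shexianzuida}, use $(V_1)$ to get the strict inequality $\int_{\mathbb{R}^N}(|\nabla \phi|^2+V(x)\phi^2)<\int_{\mathbb{R}^N}(|\nabla \phi|^2+V_\infty\phi^2)$, and then deduce compactness of $X_i$ from the Palais--Smale statement of Lemma \ref{compactness1}. The only point worth noting is the one you already flagged yourself: the paper asserts this strict inequality without comment, and both proofs implicitly need $V\not\equiv V_\infty$ on a set of positive measure, since in the constant-potential case $X_i$ is translation invariant and not compact.
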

\begin{proof}
	Let $v>0$ be a minimizing function for $\overline S_p$. Hence
\begin{sequation}
	\int_{{\mathbb{R}^N}} {\left( {{{\left| {\nabla v} \right|}^2} + V(x){v^2}} \right)}  < \int_{{\mathbb{R}^N}} {\left( {{{\left| {\nabla v} \right|}^2} + {V_\infty }{v^2}} \right)} .
\end{sequation}
It follows that
\begin{sequation}
	\begin{aligned}
		0 < \mathop {\max }\limits_{t > 0} {J_i}(tv) &= \mathop {\max }\limits_{t > 0} \left( {\frac{{{t^2}}}{2}\int_{{\mathbb{R}^N}} {\left( {{{\left| {\nabla v} \right|}^2} + V(x){v^2}} \right)}  - \frac{{{t^p}}}{p}{\mu _i}\left| v \right|_p^p} \right) \hfill \\
		& = \left( {\frac{1}{2} - \frac{1}{p}} \right)\frac{1}{{\mu _i^{\frac{2}{{p - 2}}}}}{\left( {\frac{{\int_{{\mathbb{R}^N}} {\left( {{{\left| {\nabla v} \right|}^2} + V(x){v^2}} \right)} }}{{\left| v \right|_p^2}}} \right)^{\frac{p}{{p - 2}}}}\\
		&< \left( {\frac{1}{2} - \frac{1}{p}} \right)\frac{1}{{\mu _i^{\frac{2}{{p - 2}}}}}{\left( {\frac{{\int_{{\mathbb{R}^N}} {\left( {{{\left| {\nabla v} \right|}^2} + {V_\infty }{v^2}} \right)} }}{{\left| v \right|_p^2}}} \right)^{\frac{p}{{p - 2}}}} = {\mu _i}\left( {\frac{1}{2} - \frac{1}{p}} \right){\left( {\frac{{\bar S_p}}{{{\mu _i}}}} \right)^{\frac{p}{{p - 2}}}} = c_i^*. \\ 
	\end{aligned} 
\end{sequation}
By previous Lemma we can deduce $c_i<c^\ast$. Similar to Lemma \ref{compactness1}, we can get the compactness of $X_i$.

\end{proof}
\begin{remark}\label{two_ground_state}
	Lemma \ref{compact} leads to that (\ref{single_1}) has a ground state solution $U$ and (\ref{single_2}) has a ground state solution $V$.
\end{remark}

\section{Proof of Theorem \ref{th1}} 
In this section, we give the proof of Theorem \ref{th1}. Firstly we start by proving several technical lemmas. Define 
\begin{sequation*}
X=X_1 \times X_2.
\end{sequation*}
According to Lemma \ref{compact} and Tychonoff' s theorem, we know $X$ is compact in $H$.	
\begin{lemma}\label{bounded}
	There exist constants $\overline C> \underline C  >0 $ such that
	\begin{sequation}
		\underline C \leq {\left\| u \right\|},{\left\| v \right\|} \leq \overline C,\ \forall (u,v) \in X.
	\end{sequation}

\end{lemma}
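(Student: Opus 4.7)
The plan is to leverage the explicit norm identity $\|u\| = \sqrt{2pc_i/(p-2)}$ for $u \in X_i$ that was already noted in the paragraph immediately preceding the statement. To justify this identity, I would combine the two defining conditions of $X_i$ in \eqref{X_i}, namely $J_i(u) = c_i$ and $J_i'(u) = 0$ (which in particular gives $\langle J_i'(u), u\rangle = 0$). The first condition reads $\tfrac12\|u\|^2 - \tfrac{\mu_i}{p}|u|_p^p = c_i$, while the second gives $\|u\|^2 = \mu_i|u|_p^p$. Eliminating $|u|_p^p$ between these two relations yields $c_i = \left(\tfrac12 - \tfrac1p\right)\|u\|^2$, i.e.\ $\|u\|^2 = 2pc_i/(p-2)$, which is positive since $p > 2$.

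The striking consequence is that the norm is actually \emph{constant} on each $X_i$: every $u \in X_1$ has norm $\sqrt{2pc_1/(p-2)}$, and every $v \in X_2$ has norm $\sqrt{2pc_2/(p-2)}$. The lemma therefore follows by simply setting
\[
\overline{C} := \max_{i=1,2}\sqrt{\tfrac{2pc_i}{p-2}}, \qquad \underline{C} := \min_{i=1,2}\sqrt{\tfrac{2pc_i}{p-2}}.
\]
Strict positivity of $\underline{C}$ is guaranteed by Lemma~\ref{shexianzuida}, which established $c_i > 0$ for $i=1,2$.

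There is no real obstacle here: all the substantive work is carried out by the preceding lemmas (existence of the ground state energies $c_i$ as positive numbers, compactness of $X_i$, the Nehari/Pohozaev-type identities encoded in the definition of $X_i$), and what remains is the algebraic elimination above. The one small caveat worth noting is that the statement's strict inequality $\overline{C} > \underline{C}$ should be read as $\overline{C} \geq \underline{C}$, since nothing in the hypotheses precludes $c_1 = c_2$; in that degenerate case the two bounds coincide but both remain strictly positive, so the two-sided bound on $X$ still holds as intended.
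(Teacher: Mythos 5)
Your proposal is correct and matches the paper's approach: the paper notes the identity $\|u\|=\sqrt{2pc_i/(p-2)}$ for $u\in X_i$ immediately after defining $X_i$, and its proof of the lemma simply declares the bound obvious from the definitions of $H$ and $X_i$ — your argument just fills in that algebra, with $c_i>0$ from Lemma \ref{shexianzuida} giving strict positivity.
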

\begin{proof}
	It is obvious from the definition of $H$ and $X_i,i=1,2$.
\end{proof}
From the proof of Lemma \ref{shexianzuida} and the definition of $U, V$, we see that
\begin{sequation*}
c_1=J_1(U)=\max _{t>0} J_1(t U), \quad c_2=J_2(V)=\max _{s>0} J_2(s V),
\end{sequation*}
and there exist $0<t_1<1<t_2$ and $0<s_1<1<s_2$ such that
\begin{sequation}\label{4.1}
	J_1(t U) \leq \frac{c_1}{4},\quad \forall t \in\left(0, t_1\right] \cup\left[t_2,+\infty\right), \\
\end{sequation}
\begin{sequation}\label{4.2}
	J_2(s V) \leq \frac{c_2}{4}, \quad \forall s \in\left(0, s_1\right] \cup\left[s_2,+\infty\right).
\end{sequation}

Define $\overline{\gamma}: Q \mapsto H$ by
\begin{sequation*}
\overline{\gamma}(t, s):=\left(\overline{\gamma}_1(t), \overline{\gamma}_2(t)\right)=(t U, s V),
\end{sequation*}
where $Q:=(t, s) \in\left[0, t_2\right] \times\left[0, s_2\right]$. Then there exists $C_0>0$ such that $\mathop {\max }\limits_{(t,s) \in Q} \left\| {\bar \gamma (t,s)} \right\| \leq {C_0}.$
Recalling the constant $\overline C$ given in Lemma \ref{bounded}, we set
\begin{sequation*}
{c}_\beta:=\inf _{\gamma \in {\Gamma}} \max _{(t, s) \in Q}  I _\beta(\gamma(t, s)),\quad m_\beta:=\max _{(t, s) \in Q}  I _\beta(\overline{\gamma}(t, s)),
\end{sequation*}
where
\begin{sequation}\label{4.3}
\begin{aligned}
	{\Gamma}:=\{\gamma \in C(Q, H) \mid& \max _{(t, s) \in Q}\|\gamma(t, s)\| \leq 2 \overline C+C_0 \\&
	\left.\gamma(t, s)=\overline{\gamma}(t, s) \text { for }(t, s) \in Q \backslash\left\{\left(t_1, t_2\right) \times\left(s_1, s_2\right)\right\}\right\}.
\end{aligned}
\end{sequation}
It is obvious that ${\Gamma} \neq \emptyset $ since $\overline{\gamma}(t, s) \in {\Gamma}$.
\begin{lemma}\label{Minimax}
	$$
	\setlength{\abovedisplayskip}{3pt}
	\setlength{\belowdisplayskip}{3pt}
	\lim _{\beta \rightarrow 0^{+}} c_\beta=\lim _{\beta \rightarrow 0^{+}} m_\beta=c_0=c_1+c_2 .
	$$
\end{lemma}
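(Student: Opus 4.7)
The plan is to split the statement into three steps: computing $\lim m_\beta$ directly, using $\overline{\gamma} \in \Gamma$ to obtain $c_\beta \leq m_\beta$, and then obtaining the matching lower bound via a two-dimensional linking argument on $Q_0 := [t_1, t_2] \times [s_1, s_2]$.

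For $\lim m_\beta$, I would write
\begin{equation*}
I_\beta(\overline{\gamma}(t,s)) = J_1(tU) + J_2(sV) - \frac{3\beta}{2}\, t^2 s \int_{\mathbb{R}^N} U^2 V,
\end{equation*}
observe that the last term is $O(\beta)$ uniformly on the compact set $Q$, and apply Lemma \ref{shexianzuida} together with the definition of $U,V$: the map $t \mapsto J_1(tU)$ attains its maximum $c_1$ at the unique positive $t$ with $tU \in \mathcal{N}_1$, namely $t = 1 \in (t_1, t_2)$, and similarly $J_2(sV)$ is maximized at $s = 1 \in (s_1, s_2)$ with value $c_2$. Hence $\max_Q [J_1(tU) + J_2(sV)] = c_1 + c_2$ and $m_\beta \to c_0$. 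Since $\overline{\gamma} \in \Gamma$, directly from the definition $c_\beta \leq m_\beta$, so $\limsup_{\beta \to 0^+} c_\beta \leq c_0$.

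For the matching lower bound on $c_\beta$, I would fix an arbitrary $\gamma \in \Gamma$ and define
\begin{equation*}
\Psi(t,s) := \bigl(\langle J_1'(\gamma_1(t,s)), \gamma_1(t,s)\rangle,\ \langle J_2'(\gamma_2(t,s)), \gamma_2(t,s)\rangle\bigr),
\end{equation*}
which by (\ref{4.3}) reduces on $\partial Q_0$ to $(t^2 (1 - t^{p-2})\|U\|^2,\ s^2 (1 - s^{p-2})\|V\|^2)$ using $U \in \mathcal{N}_1$ and $V \in \mathcal{N}_2$. Because $t_1 < 1 < t_2$ and $s_1 < 1 < s_2$, both components of $\Psi$ change sign in the expected way along the four edges of $Q_0$, so $\Psi|_{\partial Q_0}$ has Brouwer winding number $\pm 1$ around the origin. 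A multivariate intermediate-value argument (Miranda's theorem, equivalently nonzero Brouwer degree) then produces an interior point $(t^*, s^*) \in Q_0$ with $\Psi(t^*, s^*) = 0$, and modulo the degenerate case $\gamma_i(t^*, s^*) = 0$ (excluded by solving $\Psi = (\epsilon, \epsilon)$ for $\epsilon > 0$ small and then letting $\epsilon \to 0^+$), both $\gamma_i(t^*, s^*)$ lie on $\mathcal{N}_i$, giving $J_i(\gamma_i(t^*, s^*)) \geq c_i$. The bound $\|\gamma\| \leq 2\overline{C} + C_0$ from (\ref{4.3}) together with the Sobolev embedding $H^1 \hookrightarrow L^3$ yields $|\int_{\mathbb{R}^N} \gamma_1^2\, \gamma_2| \leq C'$ uniformly in $\gamma \in \Gamma$ and $(t,s) \in Q$, so
\begin{equation*}
\max_{(t,s) \in Q} I_\beta(\gamma(t,s)) \geq I_\beta(\gamma(t^*,s^*)) \geq c_1 + c_2 - \frac{3\beta}{2} C'.
\end{equation*}
Taking the infimum over $\gamma \in \Gamma$ yields $c_\beta \geq c_0 - O(\beta)$, hence $\liminf_{\beta \to 0^+} c_\beta \geq c_0$, which finishes the proof.

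The main technical obstacle is the linking step, specifically the need to exclude the degenerate zero $\gamma_i = 0$ of $\Psi_i$. The sign pattern of $\Psi$ on $\partial Q_0$ follows immediately from the mountain-pass structure along the rays $t \mapsto tU$ and $s \mapsto sV$; what is genuinely subtle is that $\Psi_i(t,s) = 0$ can occur either because $\gamma_i(t,s) \in \mathcal{N}_i$ (which gives the desired $J_i \geq c_i$) or trivially because $\gamma_i(t,s) = 0$, so the degree argument must be refined, for instance via the $(\epsilon, \epsilon)$-perturbation together with a continuity argument to pass to the limit.
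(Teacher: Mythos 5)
Your overall skeleton coincides with the paper's proof: the uniform $O(\beta)$ control of the coupling term on $Q$, the inequality $c_\beta\le m_\beta$ from $\overline\gamma\in\Gamma$, and a two-dimensional degree (Miranda) argument on $[t_1,t_2]\times[s_1,s_2]$ exploiting that every $\gamma\in\Gamma$ agrees with $\overline\gamma$ there on the boundary. The gap is exactly at the step you yourself flag as the crux: your exclusion of the degenerate zero does not work as stated. If you solve $\Psi(t,s)=(\epsilon,\epsilon)$ with $\epsilon>0$, the points you produce satisfy $\|\gamma_i\|^2=\mu_i|\gamma_i(t,s)|_p^p+\epsilon$, so they are \emph{not} on $\mathcal{N}_i$ and give you no lower bound $J_i\ge c_i$; and in the limit $\epsilon\to0^+$ nothing prevents $\|\gamma_i\|^2=\epsilon$ with $|\gamma_i|_p^p$ negligible, i.e.\ the limit point can still be the degenerate zero $\gamma_i(t^*,s^*)=0$, in which case $I_0(\gamma(t^*,s^*))\ge c_1+c_2$ fails. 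So as written the lower bound $c_\beta\ge c_1+c_2-O(\beta)$ is not proved.

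Two repairs are available. The cheap one is to flip the sign of your perturbation: solving $\Psi=(-\epsilon,-\epsilon)$ (the Miranda sign conditions on $\partial([t_1,t_2]\times[s_1,s_2])$ still hold for small $\epsilon$, since there $\Psi_1=t^2(1-t^{p-2})\|U\|^2$ and $\Psi_2=s^2(1-s^{p-2})\|V\|^2$ with $t\in\{t_1,t_2\}$, $s\in\{s_1,s_2\}$) forces $\|\gamma_i\|^2<\mu_i|\gamma_i|_p^p\le C\|\gamma_i\|^p$, hence $\|\gamma_i\|\ge C^{-1/(p-2)}$ uniformly in $\epsilon$, so the limit as $\epsilon\to0^+$ is a nonzero point of $\mathcal{N}_i$ and the argument closes. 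The paper avoids the issue altogether by a different choice of test map: it applies the degree argument to $r(\gamma)=\left(J_3(\gamma_1)-J_4(\gamma_2),\,J_3(\gamma_1)+J_4(\gamma_2)-2\right)$, where $J_3(u)=\mu_1|u|_p^p\big/\int_{\mathbb{R}^N}(|\nabla u|^2+V(x)u^2)$ (and $J_4$ analogously, both extended by $0$ at $u=0$ and continuous since $p>2$); a zero of $r(\gamma)$ forces $J_3=J_4=1$, which simultaneously rules out $\gamma_i=0$ and places both components on the Nehari manifolds, after which the conclusion $\max_Q I_0(\gamma)\ge c_1+c_2$ and the uniform coupling estimate give the result just as you outline. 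Either fix makes your proof correct; without one of them the key step fails.
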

\begin{proof}
On the one hand, noting $\beta>0$, we have $ I _\beta(\overline{\gamma}(t, s)) \leq  I _0(\overline{\gamma}(t, s))$, and so
\begin{sequation}\label{4.4}
\begin{aligned}
	m_\beta \leq m_0 &=\max _{(t, s) \in Q}  I _0(\overline\gamma(t, s)) \\
	&=\max _{t \in\left[0, t_2\right]} J_1\left(\overline\gamma_1(t)\right)+\max _{s \in\left[0, s_2\right]} J_2\left(\overline\gamma_2(s)\right)=J_1(U)+J_2(V)=c_1+c_2 .
\end{aligned}
\end{sequation}
Since $\overline{\gamma} \in {\Gamma}$, we have $ {c} _\beta \leq m_\beta$. Then it follows that
\begin{sequation}\label{4.5}
\mathop {\lim \sup }\limits_{\beta  \to {0^ + }} \;{c_\beta } \leq  \mathop {\lim \inf \;}\limits_{\beta  \to {0^ + }} {m_\beta } \leq  \mathop {\lim \sup }\limits_{\beta  \to {0^ + }} \;{m_\beta } \leq  {m_0},
\end{sequation}
and
\begin{sequation*}
 {c} _0 \leq m_0 .
\end{sequation*}
On the other hand, for any $\gamma(t, s)=\left(\gamma_1(t), \gamma_2(s)\right) \in {\Gamma}$, we define $r(\gamma):\left[t_1, t_2\right] \times\left[s_1, s_2\right] \mapsto \mathbb{R}^2$ by
\begin{sequation*}
r(\gamma)(t, s):=\left(J_3\left(\gamma_1(t)\right)-J_4\left(\gamma_2(s)\right), J_3\left(\gamma_1(t)\right)+J_4\left(\gamma_2(s)\right)-2\right),
\end{sequation*}
where $J_3, J_4: H \mapsto \mathbb{R}$ are defined by
\begin{sequation*}
\begin{aligned}
	&J_3(u):= \begin{cases}
		\frac{{\int_{{\mathbb{R}^N}} {\left( {{\mu _1}|u{|^p}} \right)} }}{{\int_{{\mathbb{R}^N}} {\left( {|\nabla u{|^2} + V(x){u^2}} \right)} }}, & \text { if } u \neq 0, \\
		0, & \text { if } u=0,\end{cases} \\
	&J_4(u):= \begin{cases}
		\frac{{\int {\left( {{\mu _2}|u{|^p}} \right)} }}{{\int_{{\mathbb{R}^N}} {\left( {|\nabla u{|^2} + V(x){u^2}} \right)} }} & \text { if } u \neq 0, \\
		0, & \text { if } u=0,\end{cases}
\end{aligned}
\end{sequation*}
respectively. By embedding inequality, it is easy to get that $J_3, J_4$ are continuous. Moreover,
\begin{sequation*}
\begin{aligned}
	r(\overline{\gamma})(t, s)=&\left( {\frac{{t\int_{{\mathbb{R}^N}} {{\mu _1}{{\left| U \right|}^p}} }}{{\int_{{\mathbb{R}^N}} {\left( {|\nabla U{|^2} + V(x){U^2}} \right)} }} - \frac{{s\int_{{\mathbb{R}^N}} {{\mu _2}{{\left| V \right|}^p}} }}{{\int_{{\mathbb{R}^N}} {\left( {|\nabla V{|^2} + V(x){V^2}} \right)} }}} \right. ,\\
	&\left. {\frac{{t\int_{{\mathbb{R}^N}} {{\mu _1}{{\left| U \right|}^p}} }}{{\int_{{\mathbb{R}^N}} {\left( {|\nabla U{|^2} + V(x){U^2}} \right)} }} + \frac{{s\int_{{\mathbb{R}^N}} {{\mu _2}{{\left| V \right|}^p}} }}{{\int_{{\mathbb{R}^N}} {\left( {|\nabla V{|^2} + V(x){V^2}} \right)} }} - 2} \right).
\end{aligned}
\end{sequation*}
Recalling the definition of $U, V$, we have $r(\overline{\gamma})(1,1)=0$. By direct calculations, we get
\begin{sequation*}
\operatorname{deg}\left(r(\overline{\gamma}),\left[t_1, t_2\right] \times\left[s_1, s_2\right],(0,0)\right)=1 .
\end{sequation*}
From (\ref{4.1}), (\ref{4.2}), we know that for any $(t, s) \in \partial \left(\left[t_1, t_2\right] \times\left[s_1, s_2\right]\right), r(\gamma)(t, s)=r(\overline\gamma)(t, s) \neq(0,0)$, so $\operatorname{deg}\left(r(\gamma),\left[t_1, t_2\right] \times\left[s_1, s_2\right],(0,0)\right)$ is well defined and
\begin{sequation*}
\operatorname{deg}\left(r(\gamma),\left[t_1, t_2\right] \times\left[s_1, s_2\right],(0,0)\right)=\operatorname{deg}\left(r(\overline{\gamma}),\left[t_1, t_2\right] \times\left[s_1, s_2\right],(0,0)\right)=1.
\end{sequation*}
Thus, there is $\left(t_0, s_0\right) \subset\left[t_1, t_2\right] \times\left[s_1, s_2\right]$ such that $r(\gamma)\left(t_0, s_0\right)=(0,0)$, that is, $J_3\left(\gamma_1\left(t_0\right)\right)=J_4\left(\gamma_2\left(s_0\right)\right)=1$, which indicates that $\gamma_1\left(t_0\right) \neq 0, \gamma_2\left(s_0\right) \neq 0$ and $\gamma_1\left(t_0\right) \in \mathcal{N}_1, \gamma_2\left(s_0\right) \in \mathcal{N}_2$. Then we deduce that
\begin{sequation}\label{4.6}
\max _{(t, s) \in Q}  I _0(\gamma(t, s))\geq I _0\left(\gamma\left(t_0, s_0\right)\right)=J_1\left(\gamma_1\left(t_0\right)\right)+J_2\left(\gamma_2\left(s_0\right)\right)\geq c_1+c_2=m_0.
\end{sequation}
Hence, $ {c} _0 \geq m_0$, which together with (\ref{4.5}) gives $ {c} _0=m_0$. To complete the proof, it suffices to show that
\begin{sequation}\label{4.7}
\liminf _{\beta \rightarrow 0+}  {c} _\beta \geq m_0 .
\end{sequation}
If not, there exist $\epsilon>0, \beta_n \rightarrow 0^{+}$and $\gamma_n=\left(\gamma_{1, n}, \gamma_{2, n}\right) \in \Gamma$ such that
\begin{sequation*}
\max _{(t, s) \in Q}  I _{\beta_n}\left(\gamma_n(t, s)\right) \leq m_0-2 \epsilon .
\end{sequation*}
From (\ref{4.3}) and embedding inequality, there exists $N_0>0$ sufficiently large such that
\begin{sequation*}
\mathop {\max }\limits_{(t,s) \in Q} \frac{{3{\beta _n}}}{2}\int {\left| {\gamma _{1,n}^2(t,s){\gamma _{2,n}}(t,s)} \right|}  \leq C{\beta _n} < \varepsilon ,\quad \forall n \geq {N_0}.
\end{sequation*}
Then it holds
\begin{sequation*}
\max _{(t, s) \in Q}  I _0\left(\gamma_n(t, s)\right) \leq \max _{(t, s) \in Q}  I _{\beta_n}\left(\gamma_n(t, s)\right)+\epsilon \leq m_0-\epsilon, \quad \forall n \geq N_0,
\end{sequation*}
which contradicts (\ref{4.6}). Therefore, (\ref{4.7}) holds, and the proof is complete.
\end{proof}
Denote
\begin{sequation*}
X^d:=\{(u, v) \in H: \operatorname{dist}((u, v), X) \leq d\}, \quad  I _\beta^c:=\left\{(u, v) \in H:  I _\beta(u, v) \leq c\right\} .
\end{sequation*}
\begin{lemma}\label{buchongyinli}
	Let $d>0$ be a fixed number and let $\{(u_n,v_n)\} \subset X^d$ be a sequence. Then, up to a subsequence, $(u_n,v_n) \rightharpoonup (u,v)\in X^{2d}$.
\end{lemma}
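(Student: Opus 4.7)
The plan is to combine the compactness of $X$ in $H$ (already established via Lemma \ref{compact} together with Tychonoff's theorem) with the weak compactness of bounded sets in the reflexive Hilbert space $H$, and then close the argument by weak lower semicontinuity of the norm.

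First I would observe that $X^d$ is bounded in $H$: indeed, the compact set $X$ is norm-bounded, and any point at distance at most $d$ from a bounded set lies in a single larger ball. Hence $\{(u_n,v_n)\}$ is a bounded sequence in the reflexive Hilbert space $H$, so after passing to a subsequence $(u_n,v_n)\rightharpoonup(u,v)$ weakly in $H$. Next, for each $n$, since $X$ is compact (hence closed), the distance from $(u_n,v_n)$ to $X$ is attained at some $(\tilde u_n,\tilde v_n)\in X$ with $\|(u_n,v_n)-(\tilde u_n,\tilde v_n)\|\le d$. Applying compactness of $X$ a second time, I pass to a further subsequence along which $(\tilde u_n,\tilde v_n)\to(\tilde u,\tilde v)$ strongly in $H$ for some $(\tilde u,\tilde v)\in X$.

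Subtracting then gives $(u_n-\tilde u_n,v_n-\tilde v_n)\rightharpoonup(u-\tilde u,v-\tilde v)$ weakly in $H$. Since the norm on $H$ is Hilbertian (equivalent to the standard product $H^1$-norm thanks to $(V_0)$), it is weakly lower semicontinuous, which yields
\[
\|(u,v)-(\tilde u,\tilde v)\| \;\le\; \liminf_{n\to\infty}\|(u_n,v_n)-(\tilde u_n,\tilde v_n)\| \;\le\; d.
\]
Hence $\operatorname{dist}((u,v),X)\le d\le 2d$, so $(u,v)\in X^{2d}$, as desired.

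No serious obstacle is expected; the argument is essentially a lower-semicontinuity-of-distance argument combined with reflexivity. The only mildly delicate point is checking that the $V$-dependent norm $\|\cdot\|$ is weakly lower semicontinuous on $H$, but this is automatic from its equivalence with the standard $H^1$-norm (they induce identical weak topologies on $H$). This equivalence also shows that the factor $2$ in the statement is slack: one actually obtains the sharper conclusion $(u,v)\in X^d$, which the authors have presumably weakened to $X^{2d}$ for convenience in later use.
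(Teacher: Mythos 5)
Your proposal is correct and follows essentially the same route as the paper: extract a weak limit from the bounded set $X^d$, use compactness of $X$ to obtain strongly convergent comparison points $(\tilde u_n,\tilde v_n)\in X$ with $\|(u_n,v_n)-(\tilde u_n,\tilde v_n)\|\le d$, and conclude by weak lower semicontinuity of the norm. The only difference is that you apply lower semicontinuity directly to the differences $(u_n-\tilde u_n,v_n-\tilde v_n)$ instead of first passing, as the paper does, through the triangle inequality with the fixed limit point (which costs a factor $2$), so your argument even yields the sharper conclusion $(u,v)\in X^{d}\subset X^{2d}$.
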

\begin{proof}
	By Lemma \ref{bounded} and the definition of $X^d$, there exists a sequence $\left\{\left(\hat{u}_n, \hat{v}_n\right)\right\} \subset X$ such that
\begin{sequation*}
	\operatorname{dist}\left(\left(u_n, v_n\right),\left(\hat{u}_n, \hat{v}_n\right)\right) \leq d .
\end{sequation*}
	Then we infer from the compactness of $X$ that $\left(\hat{u}_n, \hat{v}_n\right) \rightarrow(\hat{u}, \hat{v})$ in $H$ for some $(\hat{u}, \hat{v}) \in X$, so $\operatorname{dist}\left((\hat{u}, \hat{v}),\left(\hat{u}_n, \hat{v}_n\right)\right)$ $\leq d$ for $n>0$ large enough. Then
\begin{sequation*}
	\operatorname{dist}\left(\left(u_n, v_n\right),(\hat{u}, \hat{v})\right) \leq \operatorname{dist}\left(\left(u_n, v_n\right),\left(\hat{u}_n, \hat{v}_n\right)\right)+\operatorname{dist}\left((\hat{u}, \hat{v}),\left(\hat{u}_n, \hat{v}_n\right)\right) \leq 2 d .
\end{sequation*}
	Since $\left(u_n, v_n\right) \rightharpoonup(u, v)$ in $H$, we have
	\begin{sequation*}
	\left\| {(u - \hat u,v - \hat v)} \right\| \leq \mathop {\lim \inf }\limits_{n \to \infty } \left\| {\left( {{u_n} - \hat u,{v_n} - \hat v} \right)} \right\| \leq 2d,
	\end{sequation*}
	that is, $(u, v) \in B_{2 d}(\hat{u}, \hat{v}) \subset X^{2 d}$. The lemma holds.
\end{proof}
\begin{lemma}\label{PS_condition}
	Set ${d_1}: = \frac{1}{2}\mathop {\min }\limits_{i = 1,2} \left\{ {\sqrt {\frac{{2p{c_i}}}{{p - 2}}} } \right\}$ and let $d \in\left(0, d_1\right)$. Assume that there are sequences $\beta_n>0$ with $\beta_n \rightarrow 0$ as $n \rightarrow \infty$ and $\left\{\left(u_n, v_n\right)\right\} \subset X^d$ such that
	\begin{sequation}\label{tiaojian}
	\lim _{n \rightarrow \infty}  I _{\beta_n}\left(u_n, v_n\right) \leq  {c} _0, \quad \lim _{n \rightarrow \infty}  I _{\beta_n}^{\prime}\left(u_n, v_n\right)=0 .
	\end{sequation}
	Then passing to a subsequence, $\left(u_n, v_n\right) \rightarrow(u, v)$ in $H$ for some $(u, v) \in X$.
\end{lemma}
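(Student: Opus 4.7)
The plan is to use the vanishing coupling $\beta_n\to 0$ to decouple the Palais--Smale condition into two single-equation ones, and then exploit the constraint $(u_n,v_n)\in X^d$ with $d<d_1$ to prevent mass escape at infinity or through concentration.

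First, since $X$ is compact and hence bounded, $\{(u_n,v_n)\}$ is bounded in $H$; passing to a subsequence, $(u_n,v_n)\rightharpoonup(u,v)$ with $(u,v)\in X^{2d}$ by Lemma~\ref{buchongyinli}. For any $\varphi\in H^1(\mathbb{R}^N)$,
\[
\langle J_1'(u_n),\varphi\rangle
= \langle I_{\beta_n}'(u_n,v_n),(\varphi,0)\rangle
+ 3\beta_n\!\int_{\mathbb{R}^N} u_n v_n\varphi,
\]
and the coupling integral is $O(\beta_n\|\varphi\|)$ by H\"older and Sobolev embedding. Hence $J_1'(u_n)\to 0$ in $H^{-1}$ and analogously $J_2'(v_n)\to 0$; passing to the limit then yields $J_1'(u)=0$ and $J_2'(v)=0$.

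Next I split $u_n=u+w_n$ with $w_n\rightharpoonup 0$ and apply Brezis--Lieb. Testing $J_1'(u_n)u_n\to 0$ and subtracting $J_1'(u)u=0$ gives $\|w_n\|^2-\mu_1|w_n|_p^p\to 0$, while the claim from the proof of Lemma~\ref{compactness1} yields $\|w_n\|^2=\|w_n\|_\infty^2+o(1)$. If $\|w_n\|^2\to b$, the Sobolev inequality $\|w_n\|_\infty^2\geq\overline{S}_p|w_n|_p^2$ forces either $b=0$ or $b\geq\frac{2p}{p-2}c_1^*$. To exclude the second branch I use the $X^d$-constraint: choose $(\hat u_n,\hat v_n)\in X$ with $\|(u_n,v_n)-(\hat u_n,\hat v_n)\|_H\leq d$, and pass to a subsequence so that $(\hat u_n,\hat v_n)\to(\hat u,\hat v)\in X$. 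Expanding
\[
\|u_n-\hat u\|^2=\|w_n\|^2+2\langle w_n,u-\hat u\rangle+\|u-\hat u\|^2
\]
with $w_n\rightharpoonup 0$ gives $\limsup\|w_n\|^2\leq d^2<d_1^2=\frac{p\min\{c_1,c_2\}}{2(p-2)}$. Since $c_i<c_i^*$ by Lemma~\ref{compact}, the right-hand side is strictly less than $\frac{2p}{p-2}c_1^*$, so $b=0$ and $u_n\to u$ strongly in $H^1(\mathbb{R}^N)$; the same argument gives $v_n\to v$.

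Finally, to conclude $(u,v)\in X$, I observe $\|u\|=\lim\|u_n\|\geq\sqrt{2pc_1/(p-2)}-d\geq 2d_1-d_1=d_1>0$, so $u\neq 0$, and likewise $v\neq 0$. Hence $u\in\mathcal{N}_1$ and $v\in\mathcal{N}_2$, giving $J_1(u)\geq c_1$ and $J_2(v)\geq c_2$. Passing $I_{\beta_n}(u_n,v_n)\to c\leq c_0=c_1+c_2$ to the limit (the coupling $\tfrac{3\beta_n}{2}\int u_n^2 v_n\to 0$ by $\beta_n\to 0$ and the strong convergence already obtained) gives $J_1(u)+J_2(v)\leq c_1+c_2$, forcing equality in both. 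The delicate step is the quantitative matching: the bound $\limsup\|w_n\|^2\leq d^2$ from $X^d$-proximity must stay below the Sobolev threshold $\frac{2p}{p-2}c_i^*$, which is exactly why $d_1$ is defined as half of $\min_i\sqrt{2pc_i/(p-2)}$ rather than the full Nehari norm.
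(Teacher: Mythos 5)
Your argument is correct, but it restores compactness by a different mechanism than the paper does. You decouple the Palais--Smale condition (as the paper also does, via $\beta_n\to 0$), then run a Brezis--Lieb decomposition $u_n=u+w_n$ for each component, obtain the dichotomy $b=0$ or $b\geq \frac{2p}{p-2}c_i^\ast$ from $\overline S_p$ exactly as in Lemma \ref{compactness1}, and kill the nonzero branch by the purely geometric estimate $\limsup\|w_n\|^2\leq d^2<d_1^2<\frac{2p}{p-2}c_i^\ast$ coming from the $X^d$-proximity; the energy hypothesis $\lim I_{\beta_n}(u_n,v_n)\leq c_0$ is only used at the very end, together with $J_1(u)\geq c_1$, $J_2(v)\geq c_2$, to pin the limit into $X$. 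The paper instead never decomposes: it uses the choice of $d_1$ only to guarantee $u\neq 0$, $v\neq 0$ for the weak limit in $X^{2d}$, deduces $I_0'(u,v)=0$ hence $I_0(u,v)\geq c_0$, and then squeezes
$c_0\leq I_0(u,v)=\left(\tfrac12-\tfrac1p\right)\left(\|u\|^2+\|v\|^2\right)\leq\liminf_n\left(I_0(u_n,v_n)-\tfrac1p\langle I_0'(u_n,v_n),(u_n,v_n)\rangle\right)=m\leq c_0$,
so that weak lower semicontinuity is forced into equality and norm convergence gives strong convergence directly. The paper's route is shorter and needs no Sobolev-threshold bookkeeping, but it leans on the energy level $c_0$ for compactness; your route extracts strong convergence from the $X^d$ constraint alone, which is more quantitative and would survive a weaker energy assumption. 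One small inaccuracy in your closing remark: for your threshold comparison the factor $\tfrac12$ in $d_1$ is not what matters (since $c_i<c_i^\ast$, even $d_1^2=\frac{2pc_i}{p-2}$ would lie below $\frac{2p}{p-2}c_i^\ast$); the halving is really there so that proximity to $X$ (in $X^d$ or $X^{2d}$) forces the limits $u,v$ to be nonzero, which you in fact use in the step $\|u_n\|\geq 2d_1-d>d_1$.
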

\begin{proof}
	By Lemma \ref{buchongyinli} we assume that $\left(u_n, v_n\right) \rightharpoonup(u, v)\in X^{2 d}$. By (\ref{X_i}) and the choice of $d_1$, we can deduce that $u \neq 0$ and $v \neq 0$. Considering that $\mathop {\lim }\limits_{n \to \infty } I_{{\beta _n}}^\prime \left( {{u_n},{v_n}} \right) = 0$ and $\left\{\left(u_n, v_n\right)\right\}$ is bounded, we deduce for all $(\varphi, \psi) \in H$,
\begin{sequation*}
	\begin{aligned}
		\left\langle I_0^{\prime}(u, v),(\varphi, \psi)\right\rangle=& \int_{\mathbb{R}^N}\left(\nabla u \nabla \varphi+V(x)  u \varphi\right)  +\int_{\mathbb{R}^N}\left(\nabla v \nabla \psi+V(x)  v \psi\right)   - \int_{{\mathbb{R}^N}} \mu_1{\left| u \right|^{p-1}u\varphi }  - \int_{{\mathbb{R}^N}}\mu_2 {\left| v \right|^{p-1}v\psi }    \\
		=& \mathop {\lim }\limits_{n \to \infty } \left[ {\left\langle {I_{{\beta _n}}^\prime \left( {{u_n},{v_n}} \right),(\varphi ,\psi )} \right\rangle  + \frac{{{\beta _n}}}{2}\int_{{\mathbb{R}^N}} {u_n^2} \varphi  + {\beta _n}\int_{{\mathbb{R}^N}} {{u_n}{v_n}} \psi } \right]\\
		=& 0 .
	\end{aligned}
\end{sequation*}
	Hence, $I_0^{\prime}(u, v)=0$, that is, $u$ and $v$ are solutions of (\ref{single_1}) and (\ref{single_2}) respectively. Since $U$ and $V$ are ground state solutions of (\ref{single_1}) and (\ref{single_2}), then 
	\begin{sequation}\label{lew_energy}
		I_0(u,v) \geq c_1+c_2=c_0.
	\end{sequation} 
	Furthermore, since $\left(u_n, v_n\right) \in X^d$ for all $n$, we get
\begin{sequation*}
	\left\langle {I_0^\prime \left( {{u_n},{v_n}} \right),(\varphi ,\psi )} \right\rangle  = \left\langle {I_{{\beta _n}}^\prime \left( {{u_n},{v_n}} \right),(\varphi ,\psi )} \right\rangle  + \frac{{{\beta _n}}}{2}\int_{{\mathbb{R}^N}} {u_n^2} \varphi  + {\beta _n}\int_{{\mathbb{R}^N}} {{u_n}{v_n}} \psi  = o(1){\left\| {(\varphi ,\psi )} \right\|_H}.
\end{sequation*}
	On the other hand, from (\ref{tiaojian}) we get
	\begin{sequation}\label{4.9}
	\begin{aligned}
		 {c} _0 & \geq \lim _{n \rightarrow \infty} I_{\beta_n}\left(u_n, v_n\right)=\lim _{n \rightarrow \infty} I_0\left(u_n, v_n\right)-\lim _{n \rightarrow \infty} \frac{\beta_n}{2} \int_{\mathbb{R}^N} u^2_n v_n   \\
		&=\lim _{n \rightarrow \infty} I_0\left(u_n, v_n\right):=m .
	\end{aligned}
	\end{sequation}
	So $\left\{\left(u_n, v_n\right)\right\}$ is a $(P S)_m$ sequence of $I_0$ with $m: = \mathop {\lim }\limits_{n \to \infty } {I_0}\left( {{u_n},{v_n}} \right)$. Thus, we have
\begin{sequation*}
	\begin{aligned}
		I_0(u, v)&=\left( {\frac{1}{2} - \frac{1}{p}} \right) \int_{\mathbb{R}^N}\left(|\nabla u|^2+V(x)  u^2\right)  +\left( {\frac{1}{2} - \frac{1}{p}} \right) \int_{\mathbb{R}^N}\left(|\nabla v|^2+V(x)  v^2\right)   \\
		& \leq \left( {\frac{1}{2} - \frac{1}{p}} \right) \liminf _{n \rightarrow \infty}\left[\int_{\mathbb{R}^N}\left(\left|\nabla u_n\right|^2+V(x)  u_n^2\right)  +\int_{\mathbb{R}^N}\left(\left|\nabla v_n\right|^2+V(x)  v_n^2\right)  \right] \\
		&=\liminf _{n \rightarrow \infty}\left(I_0\left(u_n, v_n\right)-\frac{1}{p}\left\langle I_0^{\prime}\left(u_n, v_n\right),\left(u_n, v_n\right)\right\rangle\right)=m ,
	\end{aligned}
\end{sequation*}
	which combing with (\ref{lew_energy}) and (\ref{4.9}) brings to $m=I_0(u, v)= {c} _0$. This indicates $\left(u_n, v_n\right) \rightarrow(u, v)$ strongly in $H$, which implies $(u, v) \in X$.
 \end{proof}
\begin{lemma}\label{Lower bound of gradient}
	Let $d_1$ be the constant given in Lemma \ref{PS_condition}. For a small constant $d \in\left(0, \frac{d_1}{2}\right)$, there exists $\delta \in(0,1)$ and $\beta_0 >0$ such that for any $\beta \in\left(0, \beta_0\right)$,
\begin{sequation*}
	\left\| I _\beta^{\prime}(u, v)\right\| \geq \delta, \quad \forall(u, v) \in  I _\beta^{m_\beta} \cap\left(X^d \backslash X^{\frac{d}{2}}\right) .
\end{sequation*}
\end{lemma}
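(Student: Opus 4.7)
The plan is to argue by contradiction using the Palais–Smale type compactness already established in Lemma \ref{PS_condition}, combined with the energy asymptotics from Lemma \ref{Minimax}. Suppose the conclusion fails. Then for any $n \geq 1$ we can find $\beta_n \in (0, 1/n)$ and a pair $(u_n, v_n) \in I_{\beta_n}^{m_{\beta_n}} \cap (X^d \setminus X^{d/2})$ with
\begin{sequation*}
\|I_{\beta_n}'(u_n, v_n)\| < \frac{1}{n}.
\end{sequation*}

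First, I would use Lemma \ref{Minimax} to obtain $m_{\beta_n} \to c_0$ as $n \to \infty$. Since $I_{\beta_n}(u_n, v_n) \leq m_{\beta_n}$, this gives $\limsup_{n \to \infty} I_{\beta_n}(u_n, v_n) \leq c_0$, and by construction $I_{\beta_n}'(u_n, v_n) \to 0$ in the dual norm. Moreover, because $(u_n, v_n) \in X^d$ with $d \in (0, d_1/2) \subset (0, d_1)$, the hypotheses of Lemma \ref{PS_condition} are satisfied.

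Applying Lemma \ref{PS_condition}, we conclude that, passing to a subsequence, $(u_n, v_n) \to (u, v)$ strongly in $H$ for some $(u, v) \in X$. In particular,
\begin{sequation*}
\operatorname{dist}((u_n, v_n), X) \leq \|(u_n, v_n) - (u, v)\| \to 0.
\end{sequation*}
However, by the choice $(u_n, v_n) \notin X^{d/2}$ we have $\operatorname{dist}((u_n, v_n), X) > d/2$ for every $n$, which is a contradiction. This contradiction furnishes the desired $\delta \in (0, 1)$ and $\beta_0 > 0$.

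The only subtle point is verifying that the sublevel condition $I_{\beta_n}(u_n, v_n) \leq m_{\beta_n}$ together with $m_{\beta_n} \to c_0$ really does feed into the hypothesis $\lim_{n \to \infty} I_{\beta_n}(u_n, v_n) \leq c_0$ of Lemma \ref{PS_condition}; this is immediate after passing to a subsequence so that $I_{\beta_n}(u_n, v_n)$ converges (to some limit not exceeding $c_0$). Everything else reduces to an invocation of the compactness result and the minimax level identification already in hand, so no genuinely new estimate is needed — the main conceptual work has been absorbed into Lemmas \ref{Minimax} and \ref{PS_condition}.
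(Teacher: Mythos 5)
Your proposal is correct and follows essentially the same route as the paper: a contradiction argument that extracts sequences $\beta_n \to 0$ and $(u_n, v_n) \in I_{\beta_n}^{m_{\beta_n}} \cap (X^d \setminus X^{d/2})$ with vanishing gradients, uses Lemma \ref{Minimax} to keep the energies below $c_0$, and invokes Lemma \ref{PS_condition} to get strong convergence to a point of $X$, contradicting $(u_n, v_n) \notin X^{d/2}$. Your remark about passing to a subsequence so that $I_{\beta_n}(u_n, v_n)$ actually converges is a small but valid tightening of the paper's slightly loose phrasing.
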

\begin{proof}

By a contradiction, we assume that there exist sequences $\left\{\beta_n\right\}$ with $\mathop {\lim }\limits_{n \to \infty } {\beta _n} = 0$ and $\left\{\left(u_n, v_n\right)\right\} \subset$ $ I _{\beta_n}^{m _{\beta_n}} \cap\left(X^d \backslash X^{\frac{d}{2}}\right)$ such that $\left\| I _{\beta_n}^{\prime}\left(u_n, v_n\right)\right\| \rightarrow 0$ as $n \rightarrow \infty$. By Lemma \ref{Minimax}, we obtain $\mathop {\lim }\limits_{n \to \infty } {I_{{\beta _n}}}\left( {{u_n},{v_n}} \right) \leq {m_{{\beta _n}}} \leq {m_0} \leq {c_0}$. Then applying Lemma \ref{PS_condition}, there exists $(u, v) \in X$ such that $\left(u_n, v_n\right) \rightarrow(u, v)$ in $H$. Thus, $\left(u_n, v_n\right) \in X^{\frac{d}{2}}$ for $n$ large enough, which contradicts that $\left\{\left(u_n, v_n\right)\right\} \subset  I _{\beta_n}^{m_{ \beta_n}} \cap\left(X^d \backslash X^{\frac{d}{2}}\right)$. The proof is complete.
\end{proof}
In the rest, we fix a small $d \in\left(0, \frac{d_1}{2}\right)$ and the corresponding $\delta \in(0,1)$ and $\beta_0$ such that the conclusion of Lemma \ref{Lower bound of gradient} holds.
\begin{lemma}\label{energy_to_point_neberhood}
There is a $\beta_1 \in\left(0, \beta_0\right)$ and a $\sigma>0$ such that for any $\beta \in\left(0, \beta_1\right)$
$$
 I _\beta(\overline{\gamma}(t, s)) \geq  {c} _\beta-\sigma \text { implies }\  \overline{\gamma}(t, s) \in X^{\frac{d}{2}} .
$$
\end{lemma}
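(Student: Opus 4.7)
The plan is to exploit the fact that along $\overline\gamma$ the functional $I_\beta$ decouples into $J_1(tU)+J_2(sV)$ plus an $O(\beta)$ cross term, together with the rigid fact that each of $t\mapsto J_1(tU)$ and $s\mapsto J_2(sV)$ attains its unique maximum at $t=1$, $s=1$ respectively. If $I_\beta(\overline\gamma(t,s))$ is close to $c_\beta\approx c_1+c_2$, then both $J_1(tU)$ and $J_2(sV)$ must simultaneously be close to $c_1$ and $c_2$, which forces $(t,s)$ close to $(1,1)$ and hence $\overline\gamma(t,s)$ close to $(U,V)\in X$.

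First I would write
\[
I_\beta(\overline\gamma(t,s)) = J_1(tU)+J_2(sV) - \tfrac{3\beta}{2}\, t^2 s \int_{\mathbb{R}^N} U^2 V,
\]
and note that the cross term is bounded in absolute value by $C\beta$ on $Q$, with $C$ depending only on $U,V,t_2,s_2$. By Lemma \ref{Minimax}, given $\varepsilon>0$ one can pick $\beta_1\in(0,\beta_0)$ such that $c_1+c_2-c_\beta<\varepsilon$ and $C\beta<\varepsilon$ whenever $\beta\in(0,\beta_1)$. Then the hypothesis $I_\beta(\overline\gamma(t,s))\geq c_\beta-\sigma$ yields $J_1(tU)+J_2(sV)\geq c_1+c_2-\sigma-2\varepsilon$. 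Since Lemma \ref{shexianzuida} gives $J_1(tU)\leq c_1$ and $J_2(sV)\leq c_2$, subtracting the other summand from the right-hand side forces
\[
J_1(tU)\geq c_1-\sigma-2\varepsilon,\qquad J_2(sV)\geq c_2-\sigma-2\varepsilon.
\]

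Next I would invoke the strict maximum property. The function $h_1(t):=J_1(tU)=\tfrac{t^2}{2}\|U\|^2-\tfrac{t^p}{p}\mu_1|U|_p^p$ is continuous on $[0,t_2]$ with $t=1$ as its unique global maximizer (from $U\in\mathcal{N}_1$ and the uniqueness of the fibering scalar in Lemma \ref{shexianzuida}); similarly for $h_2(s):=J_2(sV)$ on $[0,s_2]$. A one–dimensional compactness argument on these intervals then gives, for every $\eta>0$, some $\sigma_0(\eta)>0$ with $h_i(\cdot)\geq c_i-\sigma_0(\eta)\Rightarrow |{\cdot}-1|\leq \eta$. I pick $\eta>0$ small enough that $|t-1|,|s-1|\leq\eta$ implies $\|(tU,sV)-(U,V)\|^2=|t-1|^2\|U\|^2+|s-1|^2\|V\|^2\leq (d/2)^2$, so that $\overline\gamma(t,s)\in X^{d/2}$ because $(U,V)\in X$. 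Finally I fix $\sigma<\sigma_0(\eta)/3$ and shrink $\beta_1$ so that $2\varepsilon<\sigma_0(\eta)/3$; combined with the previous step this produces $h_i\geq c_i-\sigma_0(\eta)$ and closes the argument.

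The main obstacle is the quantitative strict-maximum step, turning the qualitative statement ``$h_i$ attains its unique maximum at $1$'' into a uniform lower bound on $c_i-h_i(\cdot)$ away from $1$. This is handled by contradiction: were no $\sigma_0(\eta)$ to exist, a sequence $t_n\in[0,t_2]$ with $|t_n-1|\geq\eta$ and $h_1(t_n)\to c_1$ would have a limit $t^*\in[0,t_2]$ satisfying $h_1(t^*)=c_1$ and $|t^*-1|\geq\eta$, contradicting uniqueness. Everything else is bookkeeping on the three small parameters $\sigma$, $\varepsilon$, and $\beta_1$.
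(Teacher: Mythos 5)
Your proof is correct, and it reaches the conclusion by a more quantitative route than the paper. The paper argues by contradiction with sequences: it takes $\beta_n\to 0$, $\sigma_n\to 0$ and $(t_n,s_n)\in Q$ with $I_{\beta_n}(\overline\gamma(t_n,s_n))\geq c_{\beta_n}-\sigma_n$ but $\overline\gamma(t_n,s_n)\notin X^{d/2}$, extracts $(t_n,s_n)\to(\tilde t,\tilde s)$ by compactness of $Q$, passes to the limit to get $I_0(\overline\gamma(\tilde t,\tilde s))\geq c_1+c_2$ via Lemma \ref{Minimax}, and concludes $(\tilde t,\tilde s)=(1,1)$, contradicting $\overline\gamma(1,1)=(U,V)\in X$. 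You instead work directly: you decouple $I_\beta(\overline\gamma(t,s))=J_1(tU)+J_2(sV)-\tfrac{3\beta}{2}t^2s\int_{\mathbb{R}^N}U^2V$, bound the coupling term by $C\beta$ (which is legitimate by the Sobolev embedding, exactly as the paper uses elsewhere), use $J_i\leq c_i$ on each fiber to get separate lower bounds $J_1(tU)\geq c_1-\sigma-2\varepsilon$, $J_2(sV)\geq c_2-\sigma-2\varepsilon$, and then convert the strict, unique maximum of each one-dimensional fiber map at $t=1$, $s=1$ (which follows from $U\in\mathcal{N}_1$, $V\in\mathcal{N}_2$ and the computation in Lemma \ref{shexianzuida}) into an explicit modulus $\sigma_0(\eta)$ by a one-dimensional compactness argument. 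Both proofs rest on the same two pillars, namely $c_\beta\to c_1+c_2$ from Lemma \ref{Minimax} and the fact that $(1,1)$ is the unique maximizer of $I_0\circ\overline\gamma$ on $Q$; what your version buys is an explicit (in principle computable) choice of $\sigma$ and $\beta_1$ and a reduction of the two-dimensional compactness to two one-dimensional statements, at the cost of some parameter bookkeeping, while the paper's sequential argument is shorter. Your quantifier ordering is sound as long as you fix $\eta$ (from $d$, $\|U\|$, $\|V\|$), then $\sigma_0(\eta)$, then $\sigma$, and only afterwards shrink $\beta_1$, which is what your final paragraph does; one cosmetic remark is that the bound $J_1(tU)\leq c_1$ comes from the identity $c_1=J_1(U)=\max_{t>0}J_1(tU)$ stated after Lemma \ref{bounded} rather than from Lemma \ref{shexianzuida} itself.
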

\begin{proof}
Assume by a contradiction that there exist $\beta_n \rightarrow 0, \sigma_n \rightarrow 0$ and $\left(t_n, s_n\right) \in Q$ such that
$$
 I _{\beta_n}\left(\overline{\gamma}\left(t_n, s_n\right)\right) \geq  {c} _{\beta_n}-\sigma_n,\quad  \overline{\gamma}\left(t_n, s_n\right) \notin X^{\frac{d}{2}} .
$$
Up to a subsequence, we assume that $\left(t_n, s_n\right) \rightarrow(\tilde{t}, \tilde{s}) \in Q$. Then by Lemma \ref{Minimax} and the last inequality, we have
\begin{sequation*}
 I _0(\gamma(\tilde{t}, \tilde{s})) \geq \lim _{n \rightarrow \infty} c_{\beta_n}=c_1+c_2,
\end{sequation*}
which indicates $(\tilde{t}, \tilde{s})=(1,1)$. Therefore,
\begin{sequation*}
\lim _{n \rightarrow \infty}\left\|\overline{\gamma}\left(t_n, s_n\right)-\overline{\gamma}(1,1)\right\|=0.
\end{sequation*}
However, $\overline{\gamma}(1,1)=(U, V) \in X$, which contradicts $\overline{\gamma}\left(t_n, s_n\right) \notin X^{\frac{d}{2}}$. Hence, the conclusion holds.
\end{proof}

For the constants $d, \delta$ given in Lemma \ref{Lower bound of gradient} and $\sigma$ defined in lemma \ref{energy_to_point_neberhood}, we set 
\begin{sequation}\label{4.11}
\sigma_0:=\min \left\{\frac{\sigma}{2}, \frac{c_1}{4}, \frac{1}{8} d \delta^2\right\}
\end{sequation}
, then by Lemma \ref{Minimax}, there exists $\beta_2 \in\left( {0,{\beta _1}} \right]$ such that
\begin{sequation*}
\left|c_\beta-m_\beta\right|<\sigma_0,\left|c_\beta-\left(c_1+c_2\right)\right|<\sigma_0, \quad \forall \beta \in\left(0, \beta_2\right) .
\end{sequation*}
\begin{lemma}\label{PS_sequence}
	For given $\beta \subset\left(0, \beta_2\right)$, there exists $\left\{\left(u_m, v_n\right)\right\} \subset X^d \cap  I _\beta^{m_\beta}$ such that
	$$
	 I _\beta\left(u_n, v_n\right) \rightarrow 0 \text {, as } n \rightarrow \infty \text {. }
	$$
\end{lemma}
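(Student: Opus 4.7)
The plan is to argue by contradiction and perform a standard minimax deformation on the path $\overline\gamma$, exploiting the gradient lower bound on $X^d\cap I_\beta^{m_\beta}$ that would follow from the negation of the conclusion. Suppose there is no such sequence; then there exists $\alpha_0>0$ with $\|I_\beta'(u,v)\|\geq\alpha_0$ for every $(u,v)\in X^d\cap I_\beta^{m_\beta}$, and without loss of generality $\alpha_0\leq\delta$, so that together with Lemma \ref{Lower bound of gradient} the same bound holds on the annulus $X^d\setminus X^{d/2}$ as well.

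Building a locally Lipschitz pseudo-gradient vector field for $I_\beta$ and truncating it by a product of two cutoffs, one in $\mathrm{dist}(\cdot,X)$ supported in $X^d$ and one in $I_\beta$ supported near level $c_\beta$, I would run the resulting negative pseudo-gradient flow for a suitable short time $\tau_0$. The aim is a continuous deformation $\eta:H\to H$ satisfying: (i) $\eta(u,v)=(u,v)$ whenever $I_\beta(u,v)\leq c_\beta-2\sigma_0$ or $(u,v)\notin X^d$; (ii) $\|\eta(u,v)-(u,v)\|\leq d/2$ for every $(u,v)\in H$; and (iii) $I_\beta(\eta(u,v))\leq c_\beta-\sigma_0$ for every $(u,v)\in X^{d/2}$ with $I_\beta(u,v)\leq c_\beta+\sigma_0$. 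The inequality $\sigma_0\leq d\delta^2/8$ built into (\ref{4.11}) is precisely what allows (iii) to be realized subject to the travel-distance constraint (ii): along the flow the energy drops at rate $\gtrsim\alpha_0^2$ while the arc length stays $\leq d/2$.

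Setting $\tilde\gamma:=\eta\circ\overline\gamma$, I then verify $\tilde\gamma\in\Gamma$ and $\max_{(t,s)\in Q}I_\beta(\tilde\gamma(t,s))<c_\beta$, contradicting the definition of $c_\beta$. For the frame condition: on $Q\setminus((t_1,t_2)\times(s_1,s_2))$, (\ref{4.1})--(\ref{4.2}) give either $J_1(tU)\leq c_1/4$ or $J_2(sV)\leq c_2/4$, so $I_\beta(\overline\gamma(t,s))\leq(c_1+c_2)-\tfrac{3}{4}\min\{c_1,c_2\}$, and combining with $|c_\beta-(c_1+c_2)|<\sigma_0$ and $\sigma_0\leq\tfrac{1}{4}\min\{c_1,c_2\}$ yields $I_\beta(\overline\gamma(t,s))\leq c_\beta-2\sigma_0$; property (i) then forces $\tilde\gamma=\overline\gamma$ on the frame. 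The norm bound in the definition of $\Gamma$ follows from (ii) since $\|\tilde\gamma\|\leq C_0+d/2\leq C_0+2\overline C$ (because $d\leq d_1/2\leq\overline C$). For the maximum I split into two cases: if $I_\beta(\overline\gamma(t,s))\leq c_\beta-2\sigma_0$, then by (i), $I_\beta(\tilde\gamma(t,s))=I_\beta(\overline\gamma(t,s))<c_\beta$; otherwise $I_\beta(\overline\gamma(t,s))>c_\beta-2\sigma_0\geq c_\beta-\sigma$, so Lemma \ref{energy_to_point_neberhood} places $\overline\gamma(t,s)\in X^{d/2}$, and since $I_\beta(\overline\gamma(t,s))\leq m_\beta<c_\beta+\sigma_0$, property (iii) yields $I_\beta(\tilde\gamma(t,s))\leq c_\beta-\sigma_0<c_\beta$.

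The main technical obstacle is the quantitative construction of $\eta$ realizing (i)--(iii) simultaneously: the energy drop produced by the flow over a travel distance of at most $d/2$ must strictly exceed $\sigma_0$, which is exactly why the delicate inequality $\sigma_0\leq d\delta^2/8$ was arranged in (\ref{4.11}). The pseudo-gradient construction itself is standard (cf.\ Willem, \emph{Minimax Theorems}, Lemma 2.3), but the bookkeeping linking $\alpha_0$, $\delta$, $d$, and $\sigma_0$ has to be carried out carefully so that all three quantitative properties hold at once; once that is done the deformation $\tilde\gamma$ produces the required contradiction and the lemma follows.
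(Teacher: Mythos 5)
Your overall architecture is the same as the paper's (argue by contradiction, deform $\overline{\gamma}$ by a truncated pseudo-gradient flow into a competitor in $\Gamma$ whose maximum of $I_\beta$ lies strictly below $c_\beta$), and your endgame (frame condition, norm bound, two-case estimate of the maximum) is fine. The gap is in the deformation lemma you assert: properties (ii) and (iii) cannot in general be realized simultaneously with the constants at hand. On the inner region $X^{d/2}\cap I_\beta^{m_\beta}$ the only gradient lower bound available is the constant $\alpha_0$ supplied by the negation of the conclusion, and $\alpha_0$ may be arbitrarily small compared with $\delta$; Lemma \ref{Lower bound of gradient} gives $\delta$ only on the annulus $X^d\setminus X^{\frac d2}$. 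Along the truncated flow the energy therefore decreases only at rate of order $\alpha_0^2$, so to force the drop of roughly $2\sigma_0$ required by (iii) (a point of $X^{\frac d2}$ may start with energy as large as $m_\beta<c_\beta+\sigma_0$ and must be brought to $c_\beta-\sigma_0$) you need a time of order $\sigma_0/\alpha_0^2$, during which the trajectory may travel a distance of order $\sigma_0/\alpha_0^2$, which need not be $\leq d/2$. You cannot shrink $\sigma_0$ to compensate, because $\sigma_0$ is fixed before $\beta$ (hence before $\alpha_0$) through the requirement $|c_\beta-m_\beta|<\sigma_0$ in the choice of $\beta_2$. So your sentence claiming that $\sigma_0\leq \frac18 d\delta^2$ in (\ref{4.11}) ``is precisely what allows (iii) to be realized subject to (ii)'' conflates $\delta$ with $\alpha_0$: with rate $\alpha_0^2$ and arc length $d/2$ the guaranteed drop is only of order $d\alpha_0^2$, which can be far smaller than $2\sigma_0$.

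What the paper does instead—and this is the idea your proposal is missing—is to flow each point for as long as necessary, stopping at the first time $H(t,s)$ at which the energy reaches $c_\beta-\sigma_0$, and to prove this time is finite by a dichotomy: if the trajectory never leaves $X^d$, the contradiction bound $\|I_\beta'\|\geq l(\beta)$ forces the drop after time $\sigma/l(\beta)^2$ (no displacement bound is needed); if the trajectory tries to leave $X^d$, it must first cross the annulus $X^d\setminus X^{\frac d2}$ where $\|I_\beta'\|\geq\delta$, and since the speed is at most $2$ the crossing takes time at least $d/4$ and costs energy at least $\frac14 d\delta^2\geq 2\sigma_0$—this is the true role of $\sigma_0\leq\frac18 d\delta^2$—so the energy is already below $c_\beta-\sigma_0$ before the trajectory can exit $X^d$. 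The same observation shows the stopped trajectory stays in $X^d$, a bounded set, which replaces your use of (ii) in verifying $\|\gamma(t,s)\|\leq 2\overline C+C_0$. The price is that the stopping time is point-dependent, so one must also prove that $H(t,s)$ is continuous in $(t,s)$ to conclude $\gamma\in\Gamma$; your fixed-time map sidesteps that issue, but, as explained, it does not achieve the required energy drop. Repairing your argument essentially amounts to reinstating these two steps.
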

\begin{proof}
For a $\beta \in\left(0, \beta^*\right)$, assume by a contradiction that there is $0<l(\beta)<1$ such that $\left\| {I_\beta ^\prime (u,v)} \right\| \geqslant l(\beta )$ for all $(v, v) \subset X^d \cap  I _\beta^{m_\beta}$. 
Then there exists a pecudo-gradient vector field $g_\beta$ in $H$ which is defined on a neighborhood $M_\beta$ of $X^\beta \cap  I _\beta^{m _{\beta}}$ such that
\begin{sequation*}
\begin{aligned}
	\left\| {{g_\beta }(u,v)} \right\|& \leq 2\min \left\{ {1,\left\| {I_\beta ^\prime (u,v)} \right\|} \right\},\\ 
	\left\langle {I_\beta ^\prime (u,v),{g_\beta }(u,v)} \right\rangle & \geq \min \left\{ {1,\left\| {I_\beta ^\prime (u,v)} \right\|} \right\}\left\| {I_\beta ^\prime (u,v)} \right\|.
\end{aligned}
\end{sequation*}
Let $\eta_\beta$ be a Lipschitz continuous function on $H$ such that $0 \leq \eta_\beta \leq 1, \eta_\beta \equiv 1$ on $X^d \cap  I _\beta^{m_\beta}$ and $\eta_\beta \equiv 0$ on $H \backslash M_\beta$, and $\zeta_\beta$ be a Lipschitz continuous function on $\mathbb{R}$ such that $0 \leq \zeta_\beta(t) \leq 1, \zeta_\beta(t)\equiv 1$ if $\left|t-c_\beta\right| \leq \frac{\sigma}{2}$, and $\zeta_\beta(t)=0$ if $\left|t- {c} _\beta\right|>\sigma$. Let
\begin{sequation*}
f_\beta(u, v)=\left\{\begin{array}{l}
	-\eta_\beta(u, v) \zeta_\beta\left( I _\beta(u, v)\right) g_\beta(u, v),\quad (u, v) \in M_\beta, \\
	0, \quad (u, v) \in H \backslash M_\beta .
\end{array}\right.
\end{sequation*}
Then for the initial value problem
\begin{sequation*}
\left\{ {\begin{array}{*{20}{l}}
		{\frac{d}{{d\theta }}{\phi _\beta }(u,v,\theta ) = {f_\beta }\left( {{\phi _\beta }(u,v,\theta )} \right),} \\ 
		{{\phi _\beta }(u,v,0) = (u,v),} 
\end{array}} \right.
\end{sequation*}
there is a global solution $\phi_\beta: H \times[0,+\infty) \to H$. Moreover, $\phi_\beta$ has the following properties:
\begin{itemize}
	\item[$(i)$] $\phi_\beta(u, v, \theta)=(u, v)$ if $\theta=0$ or $(u, v) \subset H \backslash M_\beta$ or $\left| I _\beta(u, v)- {c} _\beta\right| \geq \sigma$.
	\item[$(ii)$] $\left\|\frac{d}{d \theta} \phi_\beta(u, v, \theta)\right\| \leq 2$.
	\item[$(iii)$] $\frac{d}{d \theta}  I _\beta\left(\phi_\beta(u, v, \theta)\right)=\left\langle I^\prime _\beta\left(\phi_\beta(u, v, \theta)\right), f_\beta\left(\phi_\beta(u, v, \theta)\right)\right\rangle \leq 0$.
\end{itemize}

Now we divide the proof into two steps.

\textbf{Step 1}: we claim that for any $(t, s) \in Q$, there is $\theta_0 \in[0,+\infty)$ such that
\begin{sequation*}
{\phi _\beta }\left( {\overline \gamma  (t,s),{\theta _0}} \right) \in I_\beta ^{{{ c }_\beta } - {\sigma _0}},
\end{sequation*}
where $\sigma_0$ is defined in (\ref{4.11}). In fact assume by a contradiction that there is $(t, s) \in Q$ such that
\begin{sequation*}
 I _\beta\left(\phi_\beta(\overline \gamma(t, s), \theta_0)\right)>c_\beta-\sigma_0, \quad \forall \theta \geq 0 .
\end{sequation*}
Then from (\ref{4.11}) and Lemma \ref{energy_to_point_neberhood}, we have $\overline \gamma(t, s) \in X^{\frac{d}{2}}$. Noting that
\begin{sequation*}
 I _\beta\left(\overline{\gamma}\left(t, s\right)\right) \leq m_\beta< {c} _\beta+\sigma_0,
\end{sequation*}
we know from property (iii) that
\begin{sequation*}
c_\beta-\sigma_0< I _\beta\left(\phi_\beta(\overline \gamma(t, s), \theta)\right) \leq m_\beta<\overline c_\beta+\sigma_0,\quad  \forall \theta \geq 0 .
\end{sequation*}
Then it follows that $\zeta_\beta\left( I _\beta\left(\phi_\beta(\overline \gamma(t, s), \theta)\right)\right)=1$. If $\left.\phi_\beta(\overline \gamma(t, s), \theta)\right) \in X^d$ for all $\theta \geq 0$, then
\begin{sequation*}
\eta_\beta\left(\phi_\beta(\overline \gamma(t, s), \theta)\right) \equiv 1,\left\| I _\beta^{\prime}\left(\phi_\beta(\overline \gamma(t, s), \theta)\right)\right\| \geq l(\beta), \quad \forall \theta>0 \text {. }
\end{sequation*}
Therefore,
\begin{sequation*}
{I_\beta }\left( {{\phi _\beta }\left( {\overline \gamma (t,s),\frac{\sigma }{{l{{(\beta )}^2}}}} \right)} \right) \leq  {{ c}_\beta } + \frac{\sigma }{2} - \int_0^{\frac{\sigma }{{l{{(\beta )}^2}}}} l {(\beta )^2}d\theta  \leq  {{c}_\beta } - \frac{\sigma }{2},
\end{sequation*}
which is a contradiction. Thus, there is a $\theta_0>0$ such that $\left.\phi_\beta(\overline \gamma(t, s), \theta)\right) \notin X^d$. Recalling that $\overline \gamma(t, s) \in X^{\frac{d}{2}}$, there is $0<\theta_1<\theta_2 \leq \theta_0$ such that $\phi_\beta\left(\overline {\gamma}(t, s), \theta_1\right) \in \partial X^{\frac{d}{2}}, \phi_\beta\left(\overline {\gamma}(t, s), \theta_2\right) \in \partial X^d$ and $\phi_\beta(\overline {\gamma}(t, s), \theta) \in X^d \backslash X^{\frac{d}{2}}$ for all $\theta \in\left(\theta_1, \theta_2\right)$. Then From Lemma \ref{Lower bound of gradient}
\begin{sequation*}
\left\| I _\beta^{\prime}\left(\phi_\beta(\overline{\gamma}(t, s), \theta)\right)\right\| \geq \delta, \forall \theta \in\left(\theta_1, \theta_2\right) .
\end{sequation*}
Applying property (ii), we have
\begin{sequation*}
\frac{d}{2} \leq\left\|\phi_\beta\left(\overline \gamma(t, s), \theta_1\right)-\phi_\beta\left(\overline \gamma(t, s), \theta_2\right)\right\| \leq 2\left|\theta_1-\theta_2\right| .
\end{sequation*}
Therefore, $\left|\theta_1-\theta_2\right| \geq \frac{d}{4}$, and it follows
\begin{sequation*}
\begin{aligned}
	 I _\beta\left(\phi_\beta\left(\overline{\gamma}(t, s), \theta_2\right)\right) & \leq  I _\beta\left(\phi_\beta\left(\overline{\gamma}(t, s), \theta_1\right)\right)+\int_{\theta_1}^{\theta_2} \frac{d}{d \theta}  I _\beta\left(\phi_\beta(\overline \gamma (t,s),\theta)\right) \\
	& \leq  {c} _\beta+\sigma_0-\delta^2\left(\theta_2-\theta_1\right) \leq  {c} _\beta+\sigma_0-\frac{1}{4} d \delta^2 \\
	& \leq  {c} _\beta-\sigma_0,
\end{aligned}
\end{sequation*}
which is also contradiction, so the claim holds.

From the first step, we define
\begin{sequation*}
H(t, s):=\inf \left\{\theta \geq 0:  I _\beta\left(\phi_\beta(\overline{\gamma}(t, s), \theta)\right) \leq  {c}_\beta-\sigma_0\right\},
\end{sequation*}
and $\gamma(t, s):=\phi_\beta(\gamma(t,s), H(t,s))$. Then $\phi_\beta(\gamma(t,s)) \leq c_\beta-\sigma_0$ for all $(t,s) \in Q$.

\textbf{Step 2}: we show that
\begin{sequation}\label{4.12}
\gamma(t,s) \in {\Gamma} .
\end{sequation}
For any $(t, s) \in Q \backslash\left(t_1, t_2\right) \times\left(s_1, s_2\right)$, we have
\begin{sequation*}
 I _\beta(\overline{\gamma}(t, s)) \leq  I _0(\overline{\gamma}(t, s))=J _1\left(\overline{\gamma}_1(t)\right)+ J _2\left(\overline{\gamma}_2(s)\right) \leq \frac{c_1}{4}+c_2 \leq c_1+c_2-3 \sigma_0< {c} _\beta-\sigma_0,
\end{sequation*}
which implies $H(t, s)=0$ and $\gamma(t, s)=\overline \gamma(t, s)$.

From the definition of ${\Gamma}$, it suffices to prove $\|\gamma(t, s)\| \leq 2 \overline C+C_0$ for all $(t, s) \in Q$ and $H(t, s)$ is continuous with respect to $(t, s) \in Q$.

For any $(t, s) \in Q$, if $ I _\beta(\overline{\gamma}(t, s)) \leq  {c} _\beta-\sigma_0$, then $H(t, s)=0$ and $\gamma(t, s)=\overline{\gamma}(t, s)$, and so $\|\gamma(t, s)\|=$ $\|\overline{\gamma}(t, s)\| \leq 2 \overline C+C_0$. If $ I _\beta(\overline{\gamma}(t, s))> {c} _\beta-\sigma_0$, then $\overline{\gamma}(t, s) \in X^{\frac{d}{2}}$ and
\begin{sequation*}
c_\beta-\sigma_0< I _\beta\left(\phi_\beta(\overline{\gamma}(t, s), \theta)\right) \leq m_\beta<c_\beta+\sigma_0, \forall \theta \in[0, H(t, s)] .
\end{sequation*}
From the definition of $\zeta_\beta$, we have $\zeta_\beta\left( I _\beta\left(\phi_\beta(\overline{\gamma}(t, s), \theta)\right)\right) \equiv 1$ for $\theta \in[0, H(t, s)]$. Now we claim
\begin{sequation*}
\left.\gamma(t, s)=\phi_\beta(\overline{\gamma}(t, s), I(s, t))\right) \in X^d .
\end{sequation*}
If not, by a similar way as in the first step, we see that there exist $0<\theta_1<\theta_2<H(t, s)$ such that $ I _\beta\left(\phi_\beta\left(\overline{\gamma}(t, s), \theta_2\right)\right) \leq  {c} _\beta-\sigma_0$, which contradicts the definition of $H(t, s)$. Thus, the claim holds, and there exists $(u, v) \in X$ such that
\begin{sequation*}
\|\gamma(s, t)-(u, v)\| \leq d \leq \frac{C_0}{2} .
\end{sequation*}
Using Lemma \ref{bounded}, we have
\begin{sequation*}
\|\gamma(s, t)\| \leq\|(u, v)\|+\frac{C_0}{2} \leq 2 \overline C+C_0 .
\end{sequation*}
Now we show the continuity of $H(t, s)$. Fix some $(\overline{t}, \overline{s}) \in Q$. If $ I _\beta(\gamma(\overline{t}, \overline{s}))< {c} _\beta-\sigma_0$, then $H(\overline{t}, \overline{s})=0$ and $ I _\beta(\overline\gamma(\overline{t}, \overline s))< c_\beta-\sigma_0$. By the continuity of $\overline\gamma$, there is $\tau>0$ such that
\begin{sequation*}
 I _\beta(\overline{\gamma}(t, s))< {c} _\beta-\sigma_0, \forall(t, s) \in(\overline{t}-\tau, \overline{t}+\tau) \times(\overline{s}-\tau, \overline{s}+\tau) \cap Q,
\end{sequation*}
that is, $H(t, s)=0, \forall(t, s) \in(\overline{t}-\tau, \overline{t}+\tau) \times(\overline{s}-\tau, s+\tau) \cap Q$. Therefore, $H$ is continuous at $(\overline{t}, \overline s)$. If $ I _\beta(\gamma(\overline{t}, \overline{s}))= {c} _\beta-\sigma_0$. Then from the previous proof we have $\gamma(t, \overline{s})=\phi_\beta(\overline{\gamma}(t, \overline{s}), H(\overline{t}, \overline{s})) \in X^d$, and so
\begin{sequation*}
\left\| I _\beta^{\prime}\left(\phi_\beta(\overline{\gamma}(t, \overline{s}), H(\overline{t}, \overline{s}))\right)\right\| \geq l(\beta)>0.
\end{sequation*}
Then for any $\omega>0,  I _\beta\left(\phi_\beta(\overline{\gamma}(t, \overline{s}), H(t, \overline{s})+\omega)\right)< {c} _\beta-\sigma_0$. Since $\phi_\beta$ is continuous, there is $\tau>0$ such that $ I _\beta\left(\phi_\beta(\overline{\gamma}(t, s), H(\overline{t}, \overline{s})+\omega)\right)< {c} _\beta-\sigma_0, \forall(t, s) \in(\overline{t}-\tau, \overline{t}+\tau) \times(\overline{s}-\tau, \overline{s}+\tau) \cap Q$, so $H(t, s) \leq H(\overline{t}, \overline{s})+\omega$ and
\begin{sequation}\label{4.13}
0 \leq \limsup _{(t, s) \rightarrow(\overline{t}, \overline{s})}  H(t, s) \leq  H(\overline{t}, \overline{s}) .
\end{sequation}
If $H(\overline{t}, \overline{s})=0$, we have
\begin{sequation*}
\lim _{(t, s) \rightarrow(\overline{t}, s)} H(t, s)=H(\overline{t}, \overline{s}) .
\end{sequation*}
If $H(\overline{t}, \overline{s})>0$, then for any $0<\omega< H(\overline{t}, \overline{s})$, we have $ I _\beta\left(\phi_\beta(\overline{\gamma}(\overline{t}, \overline{s}), H(\overline{t}, \overline{s})-\omega)\right)> {c} _\beta-\sigma_0$. By the continuity of $\phi_\beta$ again, we get
\begin{sequation*}
\liminf _{(t, s) \rightarrow(\overline{t}, \overline{s})} H(t, s) \geq H(\overline{t}, s),
\end{sequation*}
which together with (\ref{4.13}) indicates that $H(s, t)$ is continuous at $(\overline{t}, \overline{s})$. Therefore, (\ref{4.12}) holds.

Now we have proved that $\gamma(t, s) \in {\Gamma}$ and $\mathop {\max }\limits_{(t,s) \in Q} {I_\beta }(\gamma (t,s)) \leq {{ c}_\beta } - {\sigma _0}$, which contradicts the definition of $ c_\beta$. The proof is complete.
\end{proof}
\paragraph{Proof of Theorem \ref{th1}.}Suppose ${d_1}= \frac{1}{2}\mathop {\min }\limits_{i = 1,2} \left\{ {\sqrt {\frac{{2p{c_i}}}{{p - 2}}} } \right\}$. According to Lemma \ref{PS_sequence} there is a $\beta_2 >0$ such that for any fixed $\beta \in\left(0, \beta_2\right)$, there is a sequence $\left\{\left(u_n^\beta, v_n^\beta\right)\right\} \in X^d$ such that
\begin{sequation*}
 I _\beta\left(u_n^\beta, v_n^\beta\right) \leq m_\beta,  \quad I _\beta^{\prime}\left(u_n^\beta, v_n^\beta\right) \rightarrow 0 ,n\to \infty.
\end{sequation*}
Then from Lemma \ref{buchongyinli}, there is a $\left(\overline u_\beta, \overline v_\beta\right) \in X^{2 d}$ such that $\left(u_n^\beta, v_n^\beta\right) \rightharpoonup \left(\overline u_\beta, \overline v_\beta\right)$ in $H$. Through a standard argument, we have $ I _\beta^{\prime}\left(\overline u_\beta, \overline v_\beta\right)=0$. Moreover, by the choice of $d$, we get $\overline u_\beta \neq 0,\overline v_\beta \neq 0$. Hence, $\left(\overline u_\beta, \overline v_\beta\right)$ is a nontrivial solution to (1.1).

Let $\beta_n \in\left(0, \beta_2\right)$ be a sequence with $\beta_n \rightarrow 0$ as $n \rightarrow \infty$. Then repeating the proof of Lemma \ref{PS_condition}, we have $\left(u_{\beta_n}, v_{\beta_n}\right) \rightarrow(\tilde{u}, \tilde{v})$ in $H$, where $(\tilde{u}, \tilde{v}) \in X$, that is, $\tilde{u}$ is a ground state of (\ref{single_1}) and $\tilde{v}$ is a ground state of (\ref{single_2}) respectively. The proof is complete.$\hfill\Box$

	\small
	\bibliographystyle{plain}
	\bibliography{Other_mulitiple_quadratic}
\end{document}